\def\blue{\color{blue}}
\newtheorem{theorem}{Theorem}[section]
\newtheorem{lemma}[theorem]{Lemma}
\newtheorem{proposition}[theorem]{Proposition}
\newtheorem{remark}[theorem]{Remark}
\newtheorem{definition}[theorem]{Definition}
\newtheorem{example}[theorem]{Example}
\newtheoremstyle{step}{2\bigskipamount}{\medskipamount}{\upshape}{}{\itshape}{. }{ }{\underline{Step~\thestep}}
\theoremstyle{step}
\renewcommand{\thestep}{\arabic{step}}
\numberwithin{equation}{section}
\newcommand{\lra}{\longrightarrow}
\newcommand{\Ra}{\Rightarrow}
\newcommand{\ldual}[1]{\mathord{{\let\nolimits\relax\sideset{^\wedge}{}{#1}}}}
\newcommand{\laction}[2]{\mathord{{\let\nolimits\relax\sideset{^{#1}}{}{#2}}}}
\newcommand{\conj}[2]{\mathord{{\let\nolimits\relax\sideset{^{#1}}{}{#2}}}}
\newcommand{\xra}{\xrightarrow}
\def\CA{{\mathscr A}}
\def\CB{{\mathscr B}}
\def\CC{{\mathscr C}}
\def\CR{{\mathscr R}}
\def\CV{{\mathscr V}}
\def\CW{{\mathscr W}}
\def\CX{{\mathscr X}}
\def\dd{{\colon}}
\def\ot{{\otimes}}
\DeclareMathAlphabet{\mathbbe}{U}{bbold}{m}{n}
\def\makebigger#1#2#3{\hbox{#1$\mathsurround=0pt #2{#3}$}}
\def\bigger#1#2{{\relax\mathpalette{\makebigger#1}{#2}}}
\newcommand{\otb}{{\bigger\large{\mathbbe{\otimes}}}}
\begin{document}

\author{George Janelidze\footnote{This author gratefully acknowledges the support of the South African National Research Foundation.}
 \\
{\small{Dept Math. \& Appl. Math., University of Cape Town, Rondebosch 7701,
South Africa}} \\
{\small{<george.janelidze@uct.ac.za>}} \\
Ross Street\footnote{This author gratefully acknowledges the support of Australian Research Council Discovery Grants DP1094883, DP130101969 and DP160101519.}
 \\
{\small{Centre of Australian Category Theory, Macquarie University, NSW 2109, Australia}} \\
{\small{<ross.street@mq.edu.au>}}}

\title{Real sets}
\date{\today}
\maketitle
\begin{center}
{\em Dedicated to Peter Freyd and Bill Lawvere.}
\end{center}
\noindent {\small{\emph{2010 Mathematics Subject Classification:} 18D10; 18D20; 20M14; 28A20}}
\\
{\small{\emph{Key words and phrases:} commutative monoid; biproduct; direct sum; abstract addition; 
magnitude module; series monoidal category.}}

\begin{abstract}
\noindent 
After reviewing a universal characterization of the extended positive real numbers published by Denis Higgs in 1978, 
we define a category which provides an answer to the questions: 
\begin{itemize}
\item what is a set with half an element?
\item what is a set with $\pi$ elements?
\end{itemize}
The category of these extended positive real sets is equipped with a countable tensor product.
We develop somewhat the theory of categories with countable tensors; 
we call the commutative such categories {\em series monoidal} and conclude by only briefly
mentioning the non-commutative possibility called {\em $\omega$-monoidal}.
We include some remarks on sets having cardinalities in $[-\infty,\infty]$.

\end{abstract}

\tableofcontents

\bigskip
\begin{center}
{\em That which is in locomotion must arrive at the half-way stage before it arrives at the goal.} Zeno \cite{Aristotle}
\end{center}

\section{Introduction}\label{intro}

For many years the authors drafted joint notes on a general project dedicated to 
developing the theory of categories with tensor products of infinitely many objects.
As part of that, we were interested in sets with infinite operations.
There is already some literature in this direction: for example, Tarski's
book \cite{Tarski1949}, and the work starting with Linton and Semadeni \cite{Semadeni1973} and leading to a series of papers including Fillmore-Pumpl\"un-R\"ohrl \cite{FPR2002}.

Serendipity led us recently to Higgs' paper \cite{Higgs1978}
which provides a universal property for the set $[0,\infty]$ of extended positive real numbers with structure involving infinite summation.
The paper acknowledges ideas of Huntingdon \cite{Hunt1902} and Tarski \cite{Tarski1949}. 
More importantly for the current Special Volume is Higgs' interesting 
paragraph which begins with the sentence:
\newline \indent{\textit{In conclusion, I would like to say that the stimulus for the introduction
of magnitude modules was a question of Lawvere as to whether a direct
definition of the continuum, appropriate for use in a topos, could be
given.}}
\newline Also, of course, Bill Lawvere \cite{LawMetric} used $[0,\infty]$ as a base for 
recognizing metric spaces as a fertile part of category theory.     

Moreover, as the unary operation of halving is used by Higgs to pin down $[0,\infty]$,
surely there are connections with the work of Peter Freyd \cite{Freyd2008} which involves the mid-point operation. Such relationships, apart from the fact that real intervals are involved, are not yet apparent to the authors.  

Consequently, \cite{Higgs1978} was the trigger for us to focus our infinite tensor 
work on deciding what might be a set with a real cardinality.
The first four sections of the present paper are essentially a reorganization of Higgs' paper, emphasising the structures we later use to provide our categorical version. 

In Section~\ref{smamc}, we define series monoidal categories as categories 
equipped with a countable summation operation appropriately axiomatized. 
Many examples are explained.
What we call Zeno functors in Section~\ref{Zemc} allow us to halve objects; 
these endofunctors universally lead to our category of extended positive real sets. 

The logarithm of a positive real may be negative. Section~\ref{rois} mentions
that and other ideas about capturing all real numbers and sets.

One of the purposes of symmetric monoidal categories is to serve as bases for enriched categories. In Section~\ref{ceismc}, we look at categories enriched in a series
monoidal category and show that they form a series monoidal 2-category.
On the excuse that one of our constructions could lead us to a non-symmetric
example, we briefly look in the last Sections~\ref{omegacase} and \ref{omegamoncats} at
non-symmetric infinitary operations.

We suspect the reason no one has suggested our
construction of the category of positive real sets is that the Higgs paper was looked
at more for its contribution to measure theory \cite{Higgs1980} and that 
categories with infinite tensor products have not had much attention. 
 
\bigskip
\noindent \textbf{Note:} In Section~\ref{smamc} we explain that Examples~\ref{inftenRmod}, \ref{inftenRsermod}, \ref{Rmodadj}, and \ref{Rsermodadj} in the published version 
[\textit{Real sets}, Tbilisi Math. J. \textbf{10(3)} (2017) 23--49] were incorrect.    

\section{Series magmas and series monoids}\label{sermagmon}

Let $\mathbb{N}$ denote the natural numbers which include $0$.
For sets $X$ and $A$, we write $A^X$ for the set of functions $a\dd X\to A$ 
and we often put $a_x\dd =a(x)$ and $(a_x)_{x\in X}\dd =a$. 
Given $0\in A$, define
\begin{eqnarray*}
\delta\dd A\lra A^{\mathbb{N}\times \mathbb{N}}
\end{eqnarray*}
by   
 \begin{equation*}
\delta(a)_{m,n} =
\begin{cases}
a & \text{if } m = n, \\
0 & \text{if } m\neq n \ .
\end{cases}
\end{equation*}
We identify $\delta$ with its composite with either of the canonical
isomorphisms $\sigma_1, \sigma_2\dd A^{\mathbb{N}\times \mathbb{N}}\cong (A^{\mathbb{N}})^{\mathbb{N}}$, where
$$\sigma_1(a)(m)(n)=a_{m,n}=\sigma_2(a)(n)(m) \ ,$$ 
since $\sigma_1\circ\delta=\sigma_2\circ\delta$. 
We also write $\delta_n\dd A\to A^{\mathbb{N}}$ for $a\mapsto (\delta(a)_{m,n})_{m\in \mathbb{N}}$. 

\begin{definition} {\em A {\em series magma} is a set $A$ equipped with
an element $0\in A$ and a function 
\begin{eqnarray*}
\Sigma : A^{\mathbb{N}} \lra A \ , \ (a_i)_{i\in \mathbb{N}} \longmapsto \Sigma_{i\in \mathbb{N}} a_i
\end{eqnarray*}
such that the following diagram commutes for all $n\in \mathbb{N}$.
\begin{equation}\label{zerocond}
\begin{aligned}
\xymatrix{
A \ar[rd]_{A^{!}}\ar[rr]^{\delta \phantom{aa}}   & & (A^{\mathbb{N}})^{\mathbb{N}} \ar[ld]^{\Sigma^{\mathbb{N}}} \\
& A^{\mathbb{N}}  &
}
\end{aligned}
\end{equation}} 
\end{definition}

For any series magma $A$ and subset $S\subseteq \mathbb{N}$, we can define an operation $\Sigma_S\dd A^S\to A$ whose value at $a\in A^S$ is 
\begin{eqnarray}\label{sum/S}
\Sigma_{n\in S}a_n=\Sigma_{n\in \mathbb{N}}c_n
\end{eqnarray}
where
 \begin{equation*}
c_n =
\begin{cases}
a_n & \text{if }  n\in S\\
0 & \text{otherwise} \ .
\end{cases}
\end{equation*}

Since series magmas are models of an algebraic theory, there is a
corresponding notion of morphism, that is, a function $f\dd A\to B$
such that $f(0)=0$ and the following square commutes.
\begin{equation}\label{smagmorph}
\begin{aligned}
\xymatrix{
A^{\mathbb{N}} \ar[rr]^-{\Sigma} \ar[d]_-{f^{\mathbb{N}}} && A \ar[d]^-{f} \\
B^{\mathbb{N}} \ar[rr]_-{\Sigma} && B}
\end{aligned}
\end{equation}
Also, the resultant category $\mathrm{SerMg}$ of series magmas is both complete and cocomplete, and is Barr-Tierney exact.
The forgetful functor $\mathrm{U}\dd \mathrm{SerMg}\to \mathrm{Set}$ 
is monadic.
The monad generated by $\mathrm{U}$ and its left adjoint preserves
$\aleph_1$-filtered colimits. 

An aspect of all this is that $A^{\mathbb{N}}$ is the underlying set for the cotensor of the set $\mathbb{N}$ with the series magma $A$; the series magma
structure consists of the constant sequence $0=\delta(0)_0$ 
and $\Sigma^{\mathbb{N}}\dd (A^{\mathbb{N}})^{\mathbb{N}}\to A^{\mathbb{N}}$. 

Here is an easy Eckmann-Hilton-type result.

\begin{proposition}
Suppose a set $A$ has two series magma structures $\Sigma$ and $\Sigma'$ with the same $0$.
If $\Sigma'\dd A^{\mathbb{N}}\to A$ is a morphism for the $\Sigma$
structure on $A$ then $\Sigma'=\Sigma$ and, for all $a\in A^{\mathbb{N}\times \mathbb{N}}$, 
\begin{eqnarray}\label{sumswap}
\Sigma_{m\in \mathbb{N}}\Sigma_{n\in \mathbb{N}}a_{m,n}= \Sigma_{n\in \mathbb{N}}\Sigma_{m\in \mathbb{N}}a_{m,n} \ .
\end{eqnarray}
\end{proposition}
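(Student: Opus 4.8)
The plan is to read the hypothesis ``$\Sigma'$ is a $\Sigma$-morphism'' as an interchange law between the two operations and then run an Eckmann--Hilton argument, using the defining axiom (\ref{zerocond}) of a series magma as the substitute for a unit. First I would record the only consequence of (\ref{zerocond}) that I need: for any series magma structure on $A$ one has $\Sigma(\delta_n(c)) = c$ for every $n\in\mathbb{N}$ and $c\in A$ (a sequence concentrated at a single index sums to its value there), and in particular the constantly-$0$ sequence sums to $0$. Since $\Sigma'$ is also a series magma structure with the same $0$, this holds verbatim for $\Sigma'$; this is the only axiom about $\Sigma'$ I will use.

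Next I would unwind the morphism condition (\ref{smagmorph}) for $f=\Sigma'\dd A^{\mathbb{N}}\to A$, where the source carries the cotensor series magma structure and the target carries $\Sigma$. The care needed here is that the two composites in the square iterate $\Sigma$ and $\Sigma'$ over opposite indices: the structure map of the cotensor $A^{\mathbb{N}}$ sums the outer (``power'') index, whereas $f^{\mathbb{N}}=(\Sigma')^{\mathbb{N}}$ is post-composition and sums the inner (``fibre'') index. Evaluating both composites on a doubly-indexed family $a=(a_{m,n})\in A^{\mathbb{N}\times\mathbb{N}}$ therefore produces the genuine interchange identity
\begin{equation*}
\Sigma'_{n\in\mathbb{N}}\,\Sigma_{m\in\mathbb{N}}\,a_{m,n}\;=\;\Sigma_{m\in\mathbb{N}}\,\Sigma'_{n\in\mathbb{N}}\,a_{m,n}\,,
\end{equation*}
valid for all $a$. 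Getting this bookkeeping right --- so that the equation mixes the two operations in the two orders rather than collapsing to a tautology --- is the crux of the whole argument.

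With the interchange identity in hand, both conclusions drop out by specialisation. To get $\Sigma'=\Sigma$, given $b\in A^{\mathbb{N}}$ I would apply it to the family supported on the diagonal, $a_{m,n}=b_m$ if $m=n$ and $a_{m,n}=0$ otherwise: each inner sum is then concentrated at a single index, so by the single-index axiom the left-hand side collapses to $\Sigma'_{n}b_n=\Sigma'(b)$ and the right-hand side to $\Sigma_{m}b_m=\Sigma(b)$, giving $\Sigma'(b)=\Sigma(b)$. Finally, substituting $\Sigma'=\Sigma$ back into the interchange identity for an arbitrary family $a$ turns it into $\Sigma_{n}\Sigma_{m}a_{m,n}=\Sigma_{m}\Sigma_{n}a_{m,n}$, which is exactly (\ref{sumswap}). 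I expect no obstacle beyond the index bookkeeping flagged above; the single-index collapses and the substitution are routine.
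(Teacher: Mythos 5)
Your proposal is correct and follows essentially the same route as the paper: extract the mixed interchange identity from the morphism square, specialise it to the diagonal family $a_{m,n}=\delta(b_m)_{m,n}$ and collapse both iterated sums using \eqref{zerocond} (for $\Sigma$ and for $\Sigma'$ separately) to get $\Sigma'=\Sigma$, then substitute back to obtain \eqref{sumswap}. The only discrepancy is an immaterial transposition of which index each operation governs in your interchange identity versus the paper's, which is harmless because the identity is quantified over all doubly-indexed families.
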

\begin{proof}
The morphism condition \eqref{smagmorph} for $\Sigma'$ is
\begin{eqnarray*}
\Sigma'_{m\in \mathbb{N}}\Sigma_{n\in \mathbb{N}}a_{m,n}= \Sigma_{n\in \mathbb{N}}\Sigma'_{m\in \mathbb{N}}a_{m,n} \ .
\end{eqnarray*}
In this, for any $b\in A^{\mathbb{N}}$,  take the diagonal matrix 
$a_{m,n}= \delta(b_m)_{m,n}$.
Using \eqref{zerocond} for both sums, we obtain $\Sigma'_{m\in \mathbb{N}}b_m = \Sigma_{n\in \mathbb{N}}b_n$;
that is, $\Sigma'(b)=\Sigma(b)$. 
\end{proof}

\begin{definition} {\em A {\em series monoid} is a series magma satisfying \eqref{sumswap}. Write $\mathrm{SerMn}$ for the full subcategory of $\mathrm{SerMg}$ consisting of the series monoids.} 
\end{definition}

\begin{example}\label{extnat}
{\em The natural numbers $\mathbb{N}\cup \{\infty\}$, 
extended to include $\infty$, 
is a series monoid with $0$ the natural number $0$ and
 \begin{equation*}
\Sigma_{n\in \mathbb{N}} a_n =
\begin{cases}
\sum_{n=0}^{\infty}{a_n} & \text{if } a \text{ has finite support} \\
\infty & \text{otherwise} \ .
\end{cases}
\end{equation*}}
\end{example} 

\begin{example}\label{exreal}
{\em Similarly, the non-negative real numbers $[0,\infty]$, extended to include $\infty$, 
is a series monoid with $0$ the real number $0$ and
 \begin{equation*}
\Sigma_{n\in \mathbb{N}} a_n =
\begin{cases}
\sum_{n=0}^{\infty}{a_n} & \text{if the series converges}  \\
\infty & \text{otherwise} \ .
\end{cases}
\end{equation*}}
\end{example} 

\begin{example}\label{pointwise}
{\em For any series monoid $A$ and any set $X$, there is the pointwise
series monoid structure on $A^X$.
For various choices of $X$ and $A$, there can be interesting series submonoids of $A^X$. 
With $X$ a measurable space and $A=[0,\infty]$, the measurable functions
$f\dd X\to [0,\infty]$ form a series submonoid of $[0,\infty]^X$. 
With $X=A=[0,\infty]$, the continuous non-decreasing functions
form a series submonoid of $[0,\infty]^{[0,\infty]}$. 
}
\end{example} 

\begin{example}\label{exsuplattice}
{\em Any partially ordered set $A$ admitting countable suprema is a 
series monoid with $0$ the bottom element and $\Sigma$ equal to
the countable supremum operation $\bigvee$.
}
\end{example} 

 \begin{proposition}\label{commutative}
 Suppose $A$ is a series monoid and 
 $\xi \dd \mathbb{N} \to \mathbb{N}$ is an injective function. 
 If $a\in A^{\mathbb{N}}$ is such that $a_n=0$ for $n$ not in the image of $\xi$,
 then $\Sigma_na_{\xi(n)}=\Sigma_na_n$.
 \end{proposition}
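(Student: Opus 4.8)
The plan is to realize both sides as the two iterated sums of a single doubly-indexed family and then invoke the Fubini-type axiom \eqref{sumswap} built into the notion of series monoid. Concretely, I would introduce the array $b\in A^{\mathbb{N}\times \mathbb{N}}$ given by
\begin{equation*}
b_{m,n} =
\begin{cases}
a_m & \text{if } m = \xi(n), \\
0 & \text{otherwise.}
\end{cases}
\end{equation*}
Because $\xi$ is injective, for each fixed $n$ the sequence $(b_{m,n})_{m\in\mathbb{N}}$ is supported at the single index $m=\xi(n)$, while for each fixed $m$ there is at most one $n$ with $\xi(n)=m$; this is the structural fact that makes the two directions of summation collapse onto the two sequences in question.

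First I would evaluate the inner sums in each direction using the zero condition \eqref{zerocond}, whose content is exactly that a sequence supported on a single index (and in particular the all-zero sequence) sums to its value at that index (respectively to $0$). Summing $b$ first over $m$: for fixed $n$ the column $(b_{m,n})_m$ is supported at $m=\xi(n)$ with value $a_{\xi(n)}$, so $\Sigma_{m\in\mathbb{N}} b_{m,n}=a_{\xi(n)}$, whence $\Sigma_{n\in\mathbb{N}}\Sigma_{m\in\mathbb{N}} b_{m,n}=\Sigma_{n\in\mathbb{N}} a_{\xi(n)}$. Summing instead first over $n$: for fixed $m$, if $m=\xi(n_0)$ for the necessarily unique such $n_0$, then $(b_{m,n})_n$ is supported at $n=n_0$ with value $a_m$, giving $\Sigma_{n\in\mathbb{N}} b_{m,n}=a_m$; if $m$ is not in the image of $\xi$, then $(b_{m,n})_n$ is identically $0$ and $\Sigma_{n\in\mathbb{N}} b_{m,n}=0=a_m$, where this last equality is precisely the support hypothesis $a_m=0$ for $m\notin \operatorname{im}\xi$. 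Thus $\Sigma_{m\in\mathbb{N}}\Sigma_{n\in\mathbb{N}} b_{m,n}=\Sigma_{m\in\mathbb{N}} a_m=\Sigma_{n\in\mathbb{N}} a_n$. Applying \eqref{sumswap} to $b$ then equates the two iterated sums and yields the claim.

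I expect the only real content to be the choice of the array $b$ together with the inner-sum evaluations; once the zero condition is read as ``single-support sequences sum to their nonzero entry,'' both reductions are immediate. The one point requiring care — and the place where both hypotheses genuinely enter — is the direction that sums first over $n$: injectivity of $\xi$ is what guarantees each row $(b_{m,n})_n$ has at most one nonzero entry, so that the zero condition applies directly rather than demanding some harder evaluation, while the support assumption on $a$ is what forces the empty rows indexed by $m\notin\operatorname{im}\xi$ to contribute $a_m=0$ rather than an uncontrolled value. No obstacle beyond this bookkeeping is anticipated, since all of the associativity and commutativity of the infinite sum is supplied by \eqref{sumswap}.
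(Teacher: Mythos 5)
Your proof is correct and is essentially the paper's own argument: the paper uses the transposed array ($b_{m,n}=a_{\xi(m)}$ when $n=\xi(m)$, and $0$ otherwise), notes that injectivity of $\xi$ makes every row and column have at most one non-zero entry, and then combines \eqref{sumswap} with \eqref{zerocond} exactly as you do. Your write-up merely makes explicit the inner-sum evaluations and the role of the support hypothesis that the paper leaves implicit.
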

 \begin{proof}
 We define $b_{m,n}$ to be $a_{\xi(m)}$
 for $n=\xi(m)$ and to be $0$ otherwise. 
 Since $\xi$ is injective, each row and column of the matrix $b$
 has at most one non-zero entry. Applying \eqref{sumswap} to $b$
 and using \eqref{zerocond}, we obtain the result. 
 \end{proof}

 \begin{remark}\label{diag}
 {\em Similarly, if $\xi \dd \mathbb{N} \to \mathbb{N}\times \mathbb{N}$ is an injective function and $a_{m,n}=0$ for $(m,n)$ not in the image of $\xi$,
 then $\Sigma_na_{\xi(n)}=\Sigma_{(m,n)}a_{(m,n)}$ where, of course,
 the right-hand side is either side of \eqref{sumswap}. 
 We leave this as an exercise.} 
 \end{remark} 
 
As a particular case of \eqref{sum/S}, we can define a binary operation $a_1+a_2 = \Sigma_{n\in \{1,2\}}a_n$.
This makes the series monoid $A$ into a commutative monoid with $0$ as identity for $+$.
Moreover, $\Sigma \dd A^{\mathbb{N}} \to A$ is a monoid morphism.
The informal notation
\begin{equation*}
\Sigma_na_n = a_0+a_1+a_2+ \dots
\end{equation*}
can be suggestive.

We can also make $A$ into a pre-ordered set by defining $a\le b$ when there exists $u$ with $a+u=b$. It is clearly reflexive, transitive, has $0$ as
least element, and is respected by $\Sigma$.

 \begin{definition} {\em A series monoid is called {\em idempotent} when, for all $c\in A$ and $a\in A^{\mathbb{N}}$ such that $a_n\neq 0$ implies $a_n=c$, it follows that $\Sigma_na_n = c$ holds.} 
\end{definition}

\begin{proposition}\label{charsuplattice}
 A series monoid arises from a partially ordered set as in Example~\ref{exsuplattice} if and only if it is idempotent.
  \end{proposition}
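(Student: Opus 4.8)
The plan is to prove the two implications separately; the forward one merely unwinds Example~\ref{exsuplattice}, while the converse carries the content. For the forward direction, suppose $A$ is the series monoid of a poset admitting countable suprema, so that $0$ is the bottom element $\bot$ and $\Sigma_na_n=\bigvee_na_n$. If $a\in A^{\mathbb{N}}$ has all of its nonzero entries equal to a fixed $c$, then the set of values $\{a_n\mid n\in\mathbb{N}\}$ lies in $\{\bot,c\}$, so its countable supremum is $c$ as soon as the value $c$ actually occurs; hence $\Sigma_na_n=c$ and $A$ is idempotent. (The only delicate point is the all-zero sequence, which I read as the case $c=0$, consistent with $\bigvee=\bot$.)

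For the converse, assume $A$ idempotent. First I would extract two special instances: applying idempotence to the sequence $(a,a,0,0,\dots)$ gives $a+a=a$, so the binary operation $+$ is idempotent; and applying it to the constant sequence $(c,c,c,\dots)$ gives $\Sigma_nc=c$. Idempotence of $+$ makes $(A,+,0)$ a commutative idempotent monoid, hence a bounded join-semilattice: the relation $a\le b\iff a+b=b$ is a partial order with joins $a\vee b=a+b$ and bottom $0$. A one-line computation using $a+a=a$ shows this order coincides with the preorder $a\le b\iff\exists u\,(a+u=b)$ defined in the text, so $(A,\le)$ is genuinely a poset.

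It remains to identify $\Sigma$ with the countable supremum for $\le$, and this is where the work lies. To see that $\Sigma_na_n$ is an upper bound, I would apply the Fubini law \eqref{sumswap} to the double sequence $b$ whose $0$th row is $(a_n)_n$ and which has the single further entry $b_{1,k}=a_k$, all other entries $0$: since $\Sigma$ is a monoid morphism the iterated row sum is $\Sigma_na_n+a_k$, while every column sum equals $a_n$ (the $k$th column using $a_k+a_k=a_k$), so the iterated column sum is $\Sigma_na_n$; this yields $a_k+\Sigma_na_n=\Sigma_na_n$, i.e. $a_k\le\Sigma_na_n$. For the least-upper-bound property, suppose $a_n\le c$ for all $n$, so $a_n+c=c$; I would apply \eqref{sumswap} to the double sequence with $0$th row $(a_n)_n$ and first row the constant sequence $(c)_n$. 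The two nonzero row sums are $\Sigma_na_n$ and $\Sigma_nc=c$, totalling $\Sigma_na_n+c$, whereas each column sums to $a_n+c=c$, so the column total is $\Sigma_nc=c$; hence $\Sigma_na_n+c=c$, that is $\Sigma_na_n\le c$.

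This shows $\Sigma_na_n=\bigvee_na_n$, so $(A,\le)$ admits all countable suprema with $\bigvee=\Sigma$ and $\bot=0$, and $A$ is exactly the series monoid attached to this poset in Example~\ref{exsuplattice}. The main obstacle is the least-upper-bound step: it is precisely here that \emph{infinitary} idempotence is needed, beyond the bare $a+a=a$, through the constant-row identity $\Sigma_nc=c$, and selecting the right double array to feed into \eqref{sumswap} is the crux of the argument.
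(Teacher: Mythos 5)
Your proof is correct and follows essentially the same route as the paper's: both identify $\Sigma_n a_n$ as the supremum for the additive (pre)order by combining the Fubini law \eqref{sumswap} with binary and constant-sequence idempotence, and your two double-array computations are exactly the steps the paper compresses into ``by commutativity and idempotency'' and ``since $\Sigma$ is a monoid morphism.'' The only divergence is antisymmetry, which the paper derives by the swindle $a = a+(u+v)+(u+v)+\cdots = a+u+(v+u)+(v+u)+\cdots = a+u=b$, whereas you get it for free from the semilattice order $a\le b\iff a+b=b$ after checking it agrees with the preorder in the text --- a harmless, slightly cleaner variant.
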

 \begin{proof}
Suppose $A$ is an idempotent series monoid.
We can prove that the order is antisymmetric. 
For, take $a\le b$ and $b\le a$; so we have $a +u = b$ and $b+v=a$. 
Then $a = a+u+v = a + (u+v)+(u+v)+(u+v)+\dots$ by idempotence.
So $$a = a + u + (v+u)+(v+u)+\dots = a + (u+v)+(u+v)+\dots +u = a +u=b \ .$$
To see that $\Sigma_na_n$ is the supremum of $\{a_n:n\in \mathbb{N}\}$, we
have  $a_m+\Sigma_na_n = \Sigma_na_n$ by commutativity and idempotency; so $a_m \le \Sigma_na_n$. Now suppose $a_n\le c$ for
all $n$. This means there exist $u_n$ with $a_n+u_n=c$ for all $n$.
Since $\Sigma$ is a monoid morphism and because of idempotency,
we have $\Sigma_na_n+\Sigma_nu_n = c$. So $\Sigma_na_n\le c$. 
 \end{proof}
 
 The forgetful functor $\mathrm{U}\dd \mathrm{SerMn}\to \mathrm{Set}$
 has a left adjoint whose value at $1$ can be made explicit.  

\begin{proposition}\label{freesmon1}
 The free series monoid on a single generating element is $\mathbb{N}\cup \{\infty\}$ as in Example~\ref{extnat}.
 In other words, $\mathbb{N}\cup \{\infty\}$ is a representing object for the functor $\mathrm{U}$.
  \end{proposition}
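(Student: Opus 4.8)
The plan is to show that $\mathbb{N}\cup\{\infty\}$ represents $\mathrm{U}$, i.e.\ that evaluation at the generator $1$ is a bijection $\mathrm{SerMn}(\mathbb{N}\cup\{\infty\},B)\to \mathrm{U}(B)$, $f\mapsto f(1)$, for every series monoid $B$; naturality in $B$ is then automatic, since for $h\dd B\to B'$ one has $(h\circ f)(1)=\mathrm{U}(h)(f(1))$. The decisive structural observation is that every element of $\mathbb{N}\cup\{\infty\}$ is the $\Sigma$ of a $\{0,1\}$-valued sequence: writing $s^x\in\{0,1\}^{\mathbb{N}}$ for the sequence with $s^x_k=1$ exactly when $k<x$ (all $k$ when $x=\infty$), Example~\ref{extnat} gives $\Sigma_k s^x_k=x$, while $s^0=0$ and $1$ is the single nonzero entry of $s^1$.

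Uniqueness (injectivity) is immediate. If $f$ is a series monoid morphism with $f(1)=b$, then $f(0)=0$ and, applying the morphism law \eqref{smagmorph} to $s^x$, one gets $f(x)=f(\Sigma_k s^x_k)=\Sigma_k f(s^x_k)$, where each $f(s^x_k)$ is either $0$ or $b$. Hence $f$ is completely determined by $b$, so at most one morphism sends $1$ to a given $b$.

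Existence (surjectivity) is the substantive part. Given $b\in B$ I would define $f_b(n)=\underbrace{b+\dots+b}_{n}$ for finite $n$ (with $f_b(0)=0$) and $f_b(\infty)=\Sigma_{k}b$. Before checking that $f_b$ preserves $\Sigma$ I would record one preliminary fact, proved by induction on the size of the support: in any series monoid a finitely supported sequence sums under $\Sigma$ to the ordinary finite sum of its nonzero entries. This holds because $\Sigma$ is additive (a monoid morphism for $+$), so one splits off one entry at a time, the zero condition \eqref{zerocond} handling the single remaining term. Combining this with Proposition~\ref{commutative}, the value of $\Sigma$ on a $\{0,b\}$-valued sequence depends only on the cardinality of the set of entries equal to $b$; in particular $f_b(x)=\Sigma_k(b\,s^x_k)$ for either a finite or an infinite count. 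To verify the morphism law, take $a\in(\mathbb{N}\cup\{\infty\})^{\mathbb{N}}$, set $X=\Sigma_n a_n$, and form the $\{0,1\}$-array $t_{n,k}=s^{a_n}_k$, so that $\Sigma_k t_{n,k}=a_n$ and the total number of entries of $t$ equal to $1$ is $X$ (by reindexing $\mathbb{N}\cong\mathbb{N}\times\mathbb{N}$ as in Remark~\ref{diag}). Then, computing in $B$,
\[
\Sigma_n f_b(a_n)=\Sigma_n\Sigma_k f_b(t_{n,k})=\Sigma_{(n,k)}f_b(t_{n,k})=f_b(X),
\]
where the first equality uses $\Sigma_k f_b(t_{n,k})=f_b(a_n)$ (the preliminary fact applied to the $\{0,b\}$-sequence $(f_b(t_{n,k}))_k$, which has $a_n$ nonzero entries), the middle equality is \eqref{sumswap} with Remark~\ref{diag} in $B$, and the last rewrites the $\mathbb{N}\times\mathbb{N}$-family via a bijection as a single sequence with exactly $X$ entries equal to $b$, whose sum is $f_b(X)$ by the count-only principle. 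Thus $f_b$ is a morphism with $f_b(1)=b$.

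The main obstacle is precisely this last verification: the delicate point is not the algebra of $+$ but the passage between iterated countable sums and a single countable sum, which forces one to encode cardinalities as $\{0,1\}$-arrays and to invoke both the Fubini axiom \eqref{sumswap} and the reindexing results of Proposition~\ref{commutative} and Remark~\ref{diag}. Everything else — uniqueness and the very shape of $f_b$ — is forced by the representation of each element as a sum of copies of $1$.
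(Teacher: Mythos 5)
Your proposal is correct and follows essentially the same route as the paper: both show that evaluation at the generator, $f\mapsto f(1)$, is a bijection $\mathrm{SerMn}(\mathbb{N}\cup\{\infty\},B)\to B$ with inverse $b\mapsto f_b$, where $f_b(n)=nb$ and $f_b(\infty)=b+b+\cdots$. The only difference is one of completeness: the paper treats the fact that $f_b$ preserves $\Sigma$ as routine and omits it, whereas you supply that verification correctly via the ``count-only'' principle for $\{0,b\}$-valued sequences together with \eqref{sumswap}, Proposition~\ref{commutative} and Remark~\ref{diag}.
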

 \begin{proof}
Given a series monoid $A$, we will show that 
$$\mathrm{ev}_1\dd \mathrm{SerMn}(\mathbb{N}\cup \{\infty\}, A)\to A$$
taking $f$ to $f(1)$ is bijective. 
Take $a\in A$ and define $f_a\dd \mathbb{N}\cup \{\infty\}\to A$ by
$$f_a(n)=na=\overbrace{a+\dots +a}^n$$
and $f_a(\infty)=a+a + \dots$.
Then $f_a(1)=a$. Also, for $f_a\dd \mathbb{N}\cup \{\infty\}\to A$, we have
$f_{f(1)}(n) = f(1)+\dots f(1) = f(1+\dots +1) = f(n)$ and $f_{f(1)}(\infty) = f(1)+ f(1)\dots  = f(1+1+\dots) = f(\infty)$, so $f_{f(1)}=f$.     
 \end{proof}
 
 Countable products and sums (= coproducts) in $\mathrm{SerMn}$ are special: they coincide. 
 We shall explain this although it is much like the case of finite direct products for commutative monoids.

Consider a sequence $(A_k)_{k\in \mathbb{N}}$ of series monoids. 
The cartesian product $\prod_{k\in \mathbb{N}} A_k$ becomes a series monoid by defining $\Sigma$ to be the composite
$$(\prod_{k\in \mathbb{N}} A_k)^{\mathbb{N}}\cong \prod_{k\in \mathbb{N}} (A_k)^{\mathbb{N}} \xra{\prod_{k\in \mathbb{N}}\Sigma} \prod_{k\in \mathbb{N}} A_k \ .$$
The projections $\mathrm{pr}_k : \prod_{k\in \mathbb{N}} A_k \lra A_k$ are all morphisms of  series monoids. 
This gives the product in the category $\mathrm{SerMn}$.  

Now, we can define morphisms $\mathrm{in}_k : A_k \lra \prod_{h\in \mathbb{N}} A_h$ by
\begin{equation*}
\mathrm{in}_k(a)_h =
\begin{cases}
a & \text{for } h = k , \\
0 & \text{for } h \ne k.
\end{cases} 
\end{equation*}

\begin{proposition}\label{countabledirectsum}
The family of morphisms $\mathrm{in}_k : A_k \lra \prod_{h\in \mathbb{N}} A_h$, for $k\in \mathbb{N}$, is a coproduct in the category $\mathrm{SerMn}$. The following formulas hold:
$$\Sigma_{k\in \mathbb{N}} \mathrm{in}_k \circ \mathrm{pr}_k = 1_{\prod_{h\in \mathbb{N}} A_h} \ ,$$
$$ \mathrm{pr}_k \circ \mathrm{in}_m =
\begin{cases} 
1_{A_k} & \text{for } k=m , \\
0 & \text{for } k \ne m . 
\end{cases} $$
\end{proposition}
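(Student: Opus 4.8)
The plan is to read this as a countable biproduct statement: first establish the two displayed formulas, and then deduce the universal property of the coproduct from them. The second formula is immediate from the definitions, since $\mathrm{pr}_k(\mathrm{in}_m(a)) = \mathrm{in}_m(a)_k$ equals $a$ when $k=m$ and $0$ otherwise, so $\mathrm{pr}_k\circ\mathrm{in}_m$ is $1_{A_k}$ or the zero morphism accordingly. For the first formula I would evaluate $\Sigma_{k}(\mathrm{in}_k\circ\mathrm{pr}_k)$ pointwise in the product: at $x\in\prod_h A_h$ the $h$-th coordinate of the result is $\Sigma_{k}(\mathrm{in}_k(x_k))_h=\Sigma_{k}c_k$, where $c_k=x_h$ for $k=h$ and $c_k=0$ otherwise; by the zero condition \eqref{zerocond} this sum is $x_h$, so $\Sigma_{k}\,\mathrm{in}_k\circ\mathrm{pr}_k$ acts as the identity. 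Here I am tacitly using that a pointwise sum of series-monoid morphisms is again a morphism, a fact I will justify below.

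For the coproduct property, given a series monoid $B$ and morphisms $g_k\dd A_k\to B$, I would define the candidate factorization $g\dd \prod_h A_h\to B$ by $g(x)=\Sigma_{k}g_k(x_k)$, that is $g=\Sigma_{k}g_k\circ\mathrm{pr}_k$. The routine parts are that $g$ preserves $0$ (each $g_k(0)=0$, and $\Sigma$ of the zero sequence is $0$ by \eqref{zerocond}) and that $g\circ\mathrm{in}_m=g_m$ (precompose with $\mathrm{in}_m$, use the second formula to annihilate every term with $k\neq m$, and apply \eqref{zerocond}). The step I expect to be the crux is showing that $g$ preserves $\Sigma$. For a sequence $(x^{(i)})_{i\in\mathbb{N}}$ in the product, expanding both sides and using that each $g_k$ preserves $\Sigma$ reduces the required identity $g(\Sigma_i x^{(i)})=\Sigma_i g(x^{(i)})$ to
$$\Sigma_{k}\Sigma_{i}g_k(x^{(i)}_k)=\Sigma_{i}\Sigma_{k}g_k(x^{(i)}_k),$$
which is exactly the interchange law \eqref{sumswap} applied to the doubly-indexed family $b_{k,i}=g_k(x^{(i)}_k)$ in $B$. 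This is precisely where the hypothesis that $B$ is a series monoid (rather than merely a series magma) is essential; the same computation, with $g_k$ replaced by arbitrary morphisms into $B$, shows that any pointwise sum of morphisms is a morphism, which justifies the parenthetical remark in the previous paragraph.

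Finally I would verify uniqueness, which also pins down the role of the first formula. If $g'\dd\prod_h A_h\to B$ is any morphism with $g'\circ\mathrm{in}_k=g_k$ for all $k$, then writing $1$ as $\Sigma_{k}\mathrm{in}_k\circ\mathrm{pr}_k$ via the first formula and using that $g'$ preserves $\Sigma$ gives
$$g'=g'\circ\Big(\Sigma_{k}\mathrm{in}_k\circ\mathrm{pr}_k\Big)=\Sigma_{k}g'\circ\mathrm{in}_k\circ\mathrm{pr}_k=\Sigma_{k}g_k\circ\mathrm{pr}_k=g \ .$$
Thus the factorization is forced, delivering existence and uniqueness simultaneously, so the family $(\mathrm{in}_k)$ is a coproduct. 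The whole argument is the countable analogue of the fact that finite products of commutative monoids are biproducts, with \eqref{zerocond} playing the role of the unit laws and \eqref{sumswap} that of commutativity.
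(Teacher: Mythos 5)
Your proof is correct and follows essentially the same route as the paper's: verify the two biproduct formulas from the definitions, take $\Sigma_{k}\,g_k\circ\mathrm{pr}_k$ as the factorization, and use the first formula together with $\Sigma$-preservation to force uniqueness. You simply fill in the details the paper leaves implicit — notably that the candidate factorization is itself a series-monoid morphism, which is exactly where the interchange law \eqref{sumswap} is needed.
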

\begin{proof}
The second sentence is an immediate consequence of the definitions.
To prove we have a coproduct, take a family of morphisms 
$f_k : A_k \lra B$ into a series monoid $B$. 
Using the formulas of the second sentence, we deduce that the only morphism 
$f : \prod_{k\in \mathbb{N}} A_k \lra B$ with $f \circ \mathrm{in}_k = f_k$ for all $k \in \mathbb{N}$ is 
$f = \Sigma_{k\in \mathbb{N}} f_k$.    
\end{proof}

For families $(A_i)_{i\in I}$ with $I$ not countable, the product is still the cartesian product with
pointwise operations. The coproduct is the subobject consisting of the families of countable support.
With this, it follows from Proposition~\ref{freesmon1} that we can describe all free series monoids
(since free functors preserve coproducts and every set is a coproduct of one-element sets $1$).

\begin{proposition}\label{freesmonall}
 The free series monoid on a set $X$ is the subobject of $(\mathbb{N}\cup\{\infty\})^X$ (as in Example~\ref{extnat})
 consisting of the functions of countable support.
  \end{proposition}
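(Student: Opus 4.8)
The plan is to reduce the statement to a description of arbitrary coproducts in $\mathrm{SerMn}$ combined with Proposition~\ref{freesmon1}. Every set $X$ is the coproduct $\coprod_{x\in X} 1$ of $X$-many copies of the one-element set in $\mathrm{Set}$, and the free functor---being left adjoint to $\mathrm{U}\dd \mathrm{SerMn}\to \mathrm{Set}$---preserves all coproducts. Since Proposition~\ref{freesmon1} identifies the free series monoid on $1$ with $\mathbb{N}\cup\{\infty\}$, the free series monoid on $X$ is the coproduct in $\mathrm{SerMn}$ of $X$-many copies of $\mathbb{N}\cup\{\infty\}$. It therefore suffices to realize this coproduct as the countable-support subobject of the product $(\mathbb{N}\cup\{\infty\})^X$.

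First I would establish the general fact that, for any family $(A_i)_{i\in I}$ of series monoids, the coproduct $\coprod_{i\in I} A_i$ is the subset $S\subseteq \prod_{i\in I} A_i$ of families $(a_i)_{i\in I}$ whose support $\{i:a_i\neq 0\}$ is countable. The set $S$ is closed under the pointwise operations: a countable sum $\Sigma_n a^{(n)}$ of countably-supported families has support contained in $\bigcup_n\{i:a^{(n)}_i\neq 0\}$, and a countable union of countable sets is countable. Thus $S$ inherits the series monoid structure from the product, and the injections $\mathrm{in}_i$ of Proposition~\ref{countabledirectsum} factor through $S$ since each $\mathrm{in}_i(a)$ is supported on the single index $i$.

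To verify the universal property, given morphisms $f_i\dd A_i\to B$, any $a\in S$ with support enumerated as $\{i_0,i_1,\dots\}$ satisfies $a=\Sigma_{n\in\mathbb{N}}\mathrm{in}_{i_n}(a_{i_n})$, which forces every compatible $f\dd S\to B$ to obey $f(a)=\Sigma_{n\in\mathbb{N}}f_{i_n}(a_{i_n})$. I would then check that this prescription is independent of the chosen enumeration---which follows from Proposition~\ref{commutative}, since reindexing an injective enumeration leaves the sum unchanged---and that the resulting $f$ is a series magma morphism.

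The main obstacle I anticipate is this last verification. Checking $f(\Sigma_n a^{(n)})=\Sigma_n f(a^{(n)})$ requires amalgamating the enumerations of the supports of all the $a^{(n)}$ into a single double-indexing and then invoking the interchange identity \eqref{sumswap} (together with the zero condition \eqref{zerocond} to discard vanishing entries). The bookkeeping here is delicate: one must use the countability of $\bigcup_n\{i:a^{(n)}_i\neq 0\}$ to produce one enumeration compatible with every individual one. Once this double family is correctly set up, the morphism property and the uniqueness of $f$ follow routinely, and substituting $A_i=\mathbb{N}\cup\{\infty\}$ gives the proposition.
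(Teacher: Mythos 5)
Your proposal follows exactly the paper's route: the paper likewise observes that the coproduct of an arbitrary family in $\mathrm{SerMn}$ is the countable-support subobject of the product, that the free functor preserves coproducts, that every set is a coproduct of singletons, and then invokes Proposition~\ref{freesmon1}. The paper leaves the verification of the coproduct's universal property unstated, so your extra detail (including the enumeration-independence via Proposition~\ref{commutative} and the interchange bookkeeping) simply fills in what the paper takes as routine.
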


\section{The symmetric closed structure}\label{scs}

For series monoids $A$ and $B$, we write $\mathrm{ser}(A,B)$ for the set $\mathrm{SerMn}(A,B)$
equipped with the pointwise series monoid structure.
From Proposition~\ref{freesmon1}, we have an isomorphism 
$$\mathrm{i}_{\mathrm{ser}(A,A)} : [\mathbb{N}\cup \{\infty\},\mathrm{ser}(A,A)] \cong \mathrm{ser}(A,A) \ ,$$ 
and so a morphism $$\mathrm{j}_A : \mathbb{N}\cup \{\infty\} \lra \mathrm{ser}(A,A)$$ 
corresponding to the identity morphism $1_A\in \mathrm{ser}(A,A)$.  

Since $\Sigma$ for each $\mathrm{ser}(C,D)$ is defined pointwise in $C$, we have a morphism
$$\mathrm{L}^A : \mathrm{ser}(B,C) \lra \mathrm{ser}(\mathrm{ser}(A,B),\mathrm{ser}(A,C))$$
defined by $\mathrm{L}^A(g)(f) = g\circ f$.

There is also an isomorphism 
$$\mathrm{s}_{ABC} : \mathrm{ser}(A,\mathrm{ser}(B,C))\cong \mathrm{ser}(B,\mathrm{ser}(A,C))$$ 
defined by noting that both sides are isomorphic to the pointwise series monoid of functions 
$f : A\times B \lra C$ for which all 
$f(a,-):B\lra C$ and $f(-,b):A\lra C$ are morphisms.   

See \cite{EilKel1966} and \cite{scc} for the definition of closed category and the definition of category enriched in a closed category. 

\begin{proposition}\label{closed structure}
 A symmetric closed structure on the category $\mathrm{SerMn}$ is defined by 
 $(\mathrm{i}, \mathrm{j}, \mathrm{L}, \mathrm{s})$. 
 The obvious inclusions $1\lra \mathbb{N}\cup \{\infty\}$ and $\mathrm{U}[A,B] \lra (\mathrm{U}B)^{\mathrm{U}A}$ provide the forgetful functor $\mathrm{U} : \mathrm{SerMn} \lra \mathrm{Set}$ with a closed structure.    
\end{proposition}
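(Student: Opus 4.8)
The plan is to verify directly that $(\mathrm{SerMn},\mathrm{ser}(-,-),\mathbb{N}\cup\{\infty\},\mathrm{i},\mathrm{j},\mathrm{L},\mathrm{s})$ satisfies the axioms for a symmetric closed category in the sense of Eilenberg--Kelly \cite{EilKel1966,scc}. Three things must be established: first, that $\mathrm{ser}(-,-)\dd\mathrm{SerMn}^{\mathrm{op}}\times\mathrm{SerMn}\to\mathrm{SerMn}$ is a functor, sending a pair $(f\dd A'\to A,\,g\dd B\to B')$ of series-monoid morphisms to the map $h\mapsto g\circ h\circ f$; second, that each of $\mathrm{i}$, $\mathrm{j}$, $\mathrm{L}$, $\mathrm{s}$ is a morphism of series monoids enjoying the naturality and dinaturality demanded of the closed-category data; and third, that the coherence axioms of a closed category, together with the extra axioms governing the symmetry $\mathrm{s}$, all hold.

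The decisive feature throughout is that the series-monoid structure on each hom-object $\mathrm{ser}(C,D)$ is defined valuewise in $C$, so that $(\Sigma_n h_n)(c)=\Sigma_n h_n(c)$ and $0(c)=0$. Using this I would discharge the well-definedness in the second point. Functoriality of $\mathrm{ser}(-,-)$ reduces to the observation that $h\mapsto g\circ h\circ f$ preserves $0$ and, the sums being valuewise, commutes with $\Sigma$. For $\mathrm{L}^A$ one checks two things: that $\mathrm{L}^A(g)\dd f\mapsto g\circ f$ is itself a morphism $\mathrm{ser}(A,B)\to\mathrm{ser}(A,C)$---which uses that $g$ is a morphism, so that $g\circ(\Sigma_n f_n)=\Sigma_n(g\circ f_n)$ valuewise---and that $g\mapsto\mathrm{L}^A(g)$ commutes with $\Sigma$, which is again immediate from valuewise summation. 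The map $\mathrm{s}$ is the evident symmetry of the description of $\mathrm{ser}(A,\mathrm{ser}(B,C))$ in terms of bimorphisms $A\times B\to C$ recorded just before the statement, and $\mathrm{j}_A$ is a morphism by its very construction, being the image of $1_A$ under the isomorphism of Proposition~\ref{freesmon1}. The required naturality and dinaturality of $\mathrm{i},\mathrm{j},\mathrm{L},\mathrm{s}$ in each variable are elementwise identities about composition of functions.

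For the coherence axioms I would exploit that the forgetful functor $\mathrm{U}\dd\mathrm{SerMn}\to\mathrm{Set}$ is faithful and that the underlying function of each structure map is the expected set-theoretic one: $\mathrm{U}\,\mathrm{i}_A$ is evaluation at the generator $1$, $\mathrm{U}\,\mathrm{L}^A(g)$ is $f\mapsto g\circ f$, and $\mathrm{U}\,\mathrm{s}$ transposes arguments. Each axiom is an equation between two parallel morphisms of series monoids; applying $\mathrm{U}$ reduces it to an equation of functions, which one verifies by a direct elementwise computation involving only composition, evaluation and transposition of maps---precisely the manipulations that make $\mathrm{Set}$ a closed category. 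Faithfulness of $\mathrm{U}$ then returns the desired equation back in $\mathrm{SerMn}$, so every axiom holds.

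The second sentence follows by the same device. A closed-functor structure on $\mathrm{U}$ consists of the map $1\to\mathrm{U}(\mathbb{N}\cup\{\infty\})$ picking out the generator together with the comparison $\mathrm{U}[A,B]\to(\mathrm{U}B)^{\mathrm{U}A}$, and the closed-functor axioms are once more identities of underlying functions, holding because the stated inclusions are defined so as to intertwine $\mathrm{i},\mathrm{j},\mathrm{L},\mathrm{s}$ with their $\mathrm{Set}$ counterparts; alternatively one may invoke the general fact that for any closed category the representable functor $\mathrm{SerMn}(\mathbb{N}\cup\{\infty\},-)\cong\mathrm{U}$ (Proposition~\ref{freesmon1}) is canonically a closed functor, its comparison maps being exactly these inclusions. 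I expect the only genuine content to lie in the well-definedness checks of the second paragraph---the point at which the valuewise definition of $\Sigma$ on hom-objects, and the hypothesis that the arguments of $\mathrm{L}$ are themselves series-monoid morphisms, are actually used---whereas the coherence and closed-functor axioms, being equations of underlying functions, come essentially for free from faithfulness of $\mathrm{U}$ and the closed structure of $\mathrm{Set}$.
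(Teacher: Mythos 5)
Your proposal is correct and follows essentially the same route as the paper: the paper's proof likewise reduces every axiom to the cartesian closed structure on $\mathrm{Set}$ by using that each comparison $\mathrm{U}[A,B]\lra(\mathrm{U}B)^{\mathrm{U}A}$ is a monomorphism (equivalently, that $\mathrm{U}$ is faithful on homs) together with the freeness of $\mathbb{N}\cup\{\infty\}$ on $1$ from Proposition~\ref{freesmon1}. Your write-up merely spells out in more detail the well-definedness checks that the paper leaves implicit.
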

\begin{proof}
To check that the axioms pass from those axioms for the cartesian closed structure on $\mathrm{Set}$ we use the facts that each $\mathrm{U}[A,B] \lra (\mathrm{U}B)^{\mathrm{U}A}$ is a monomorphism, and that $\mathbb{N}\cup \{\infty\}$ is free on $1$ (Proposition~\ref{freesmon1}).     
\end{proof}

\begin{proposition}
The forgetful functor
$\mathrm{U}\dd \mathrm{SerMn} \lra \mathrm{Set}$
is monadic of rank $\aleph_1$.
The left adjoint is defined on objects in Proposition~\ref{freesmonall}. 
The monad on $\mathrm{Set}$ generated by the adjunction is closed (= monoidal).
\end{proposition}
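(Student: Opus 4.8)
The plan is to dispatch the three claims in turn, leaning on the facts already recorded for $\mathrm{SerMg}$ and on the closed structure of Proposition~\ref{closed structure}.

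\emph{Monadicity.} I would observe that $\mathrm{SerMn}$ is exactly the full subcategory of $\mathrm{SerMg}$ cut out by the single equation \eqref{sumswap}. Since $\Sigma$ has arity $\mathbb{N}$ and \eqref{sumswap} equates two derived operations in the countably many variables $(a_{m,n})_{(m,n)\in\mathbb{N}\times\mathbb{N}}$, this exhibits $\mathrm{SerMn}$ as a variety, closed in $\mathrm{SerMg}$ under products and subalgebras. On the level of monads, imposing \eqref{sumswap} amounts to a regular quotient $T\twoheadrightarrow T'$ of the series-magma monad $T$ on $\mathrm{Set}$, and the $T'$-algebras are precisely the series magmas satisfying \eqref{sumswap}, i.e.\ the series monoids; hence $\mathrm{U}\dd\mathrm{SerMn}\to\mathrm{Set}$ is monadic with monad $T'$. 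If one prefers a direct verification by Beck's theorem: the left adjoint exists by Proposition~\ref{freesmonall}; $\mathrm{U}$ reflects isomorphisms, since the set-inverse of a bijective morphism of series monoids is automatically a morphism; and $\mathrm{U}$ creates coequalizers of $\mathrm{U}$-split pairs because the variety $\mathrm{SerMn}$ is closed under the requisite quotients.

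\emph{Rank and left adjoint.} The rank is controlled by the fact that \eqref{sumswap} constrains only countably many coordinates, so it is both detected by and preserved under $\aleph_1$-filtered colimits. Thus $\mathrm{SerMn}$ is closed in $\mathrm{SerMg}$ under $\aleph_1$-filtered colimits, and these are created from $\mathrm{Set}$; as the series-magma monad already preserves $\aleph_1$-filtered colimits, the composite $\mathrm{SerMn}\hookrightarrow\mathrm{SerMg}\to\mathrm{Set}$ does too, so $\mathrm{U}$ has rank $\aleph_1$. No further work is needed for the second claim: the left adjoint $\mathrm{F}$ sends a set $X$ to the free series monoid on $X$, which is the countable-support subobject of $(\mathbb{N}\cup\{\infty\})^X$ of Proposition~\ref{freesmonall}.

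\emph{The monad is closed $(=$ monoidal$)$.} For the last claim I would pass through a monoidal adjunction. By Proposition~\ref{closed structure} the data $(\mathrm{i},\mathrm{j},\mathrm{L},\mathrm{s})$ make $\mathrm{SerMn}$ a symmetric closed category and exhibit $\mathrm{U}$ as a closed functor over the cartesian closed $\mathrm{Set}$; since $\mathrm{SerMn}$ is cocomplete, the internal hom $\mathrm{ser}(-,-)$ has a left adjoint $\otimes$, so $\mathrm{SerMn}$ is in fact symmetric monoidal closed. The key point is that $\mathrm{F}$ is strong monoidal: a short Yoneda computation using the tensor--hom adjunction in $\mathrm{SerMn}$ together with $\mathrm{F}\dashv\mathrm{U}$ gives natural isomorphisms $\mathrm{SerMn}(\mathrm{F}X\otimes\mathrm{F}Y,C)\cong\mathrm{Set}(X\times Y,\mathrm{U}C)\cong\mathrm{SerMn}(\mathrm{F}(X\times Y),C)$, whence $\mathrm{F}X\otimes\mathrm{F}Y\cong\mathrm{F}(X\times Y)$. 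Therefore $\mathrm{F}\dashv\mathrm{U}$ is a monoidal adjunction, and by the standard doctrinal-adjunction result the induced monad $T'=\mathrm{U}\mathrm{F}$ is a monoidal monad; being over the symmetric closed $\mathrm{Set}$, monoidal and closed coincide (Eilenberg--Kelly), and the monad is moreover commutative. Concretely, the comparison $T'X\times T'Y\to T'(X\times Y)$ is multiplication of multiplicities in $\mathbb{N}\cup\{\infty\}$, and its symmetry is the commutativity. I expect this last part to be the main obstacle, not because any single step is deep but because it requires assembling the tensor, the strong-monoidal structure on $\mathrm{F}$, and the coherence of the closed-functor data into a genuine monoidal monad; by contrast the monadicity and its rank reduce, once \eqref{sumswap} is recognized as a single countable equation, to the standard theory of $\aleph_1$-ary varieties.
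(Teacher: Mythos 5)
Your argument is correct, but it is worth seeing how it relates to the paper's own proof, which consists of the single sentence ``The theory of series monoids is commutative.'' The paper is delegating everything to general theory: monadicity and rank $\aleph_1$ are the standard facts for an $\aleph_1$-ary variety (already recorded for $\mathrm{SerMg}$ in Section 2 and inherited by the equationally-defined full subcategory $\mathrm{SerMn}$), and Kock's theorem \cite{Kock1971}, cited immediately after the proposition, says that the monad of a commutative theory on $\mathrm{Set}$ is a closed (indeed symmetric monoidal) monad. Commutativity of the theory is exactly the observation that the generating operations commute with one another: $\Sigma$ commuting with $\Sigma$ is the axiom \eqref{sumswap} that carves $\mathrm{SerMn}$ out of $\mathrm{SerMg}$, and $\Sigma$ commuting with $0$ follows from \eqref{zerocond}. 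Your Beck's-theorem verification and your rank argument expand what the paper leaves implicit, and for the closed structure you take a genuinely different route: strong monoidality of the free functor, $\mathrm{F}(X\times Y)\cong\mathrm{F}X\otimes\mathrm{F}Y$, plus doctrinal adjunction. That is a legitimate alternative and buys a concrete description of the comparison map $T'X\times T'Y\to T'(X\times Y)$ as multiplication of multiplicities. The one step you assert rather than check --- that the resulting monoidal monad is symmetric, ``its symmetry is the commutativity'' --- is precisely the point the paper isolates as the entire content of the proof; it would strengthen your write-up to say explicitly that this reduces to \eqref{sumswap}, i.e.\ to the defining axiom of series monoids.
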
   
\begin{proof} The theory of series monoids is commutative.
\end{proof}

By the general theory provided by Kock \cite{Kock1971}, the closed structure of Section~\ref{scs} (see Proposition~\ref{closed structure}) is monoidal. We will write $A\otimes B$ for the tensor product of series monoids. 
We are interested in monoids for this tensor product; they might be called {\em series rigs}.
(The term ``rig'' was used by Lawvere and Schanuel; the lack of an ``n'' in the word was to indicate
the lack of negatives in the otherwise ring.)

Let $A$ be a commutative series rig; 
that is a commutative monoid in the symmetric monoidal category $\mathrm{SerMn}$.
We will write the operation of the monoid multiplicatively. This product
distributes over $\Sigma$, and $0$ acts as a zero. 
By associativity and commutativity, for each family $a=(a_n)_{n\in S}$ of elements of $A$
indexed by a finite set $S$, there is an element $\Pi_{m\in S}a_m \in A$. 

Write $\binom{\mathbb{N}}{n}$ for the set of subsets of $\mathbb{N}$ of cardinality $n$.

Now for $a\in A^{\mathbb{N}}$, define
\begin{eqnarray}\label{fP}
\mathrm{P}a= \mathrm{P}_{r\in \mathbb{N}}a_r = \Sigma_{0<n\in \mathbb{N}}\Sigma_{S\in \binom{\mathbb{N}}{n}}\Pi_{m\in S}a_m \ .
\end{eqnarray}
 Less formally,
 \begin{eqnarray}\label{lfP}
\mathrm{P}a= \Sigma_ia_i + \Sigma_{i<j}a_ia_j + \Sigma_{i<j<k}a_ia_ja_k + \dots \ .
\end{eqnarray}
In particular,
\begin{eqnarray}\label{lfP2}
\mathrm{P}(a_0,a_1,0,0,\dots) = a_0 + a_1 + a_0a_1 \ .
\end{eqnarray}
 \begin{proposition}\label{Pprop}
 Any commutative monoid $A$ in the monoidal category $\mathrm{SerMn}$ has a series monoid
 structure defined by $0\in A$ and $\mathrm{P}\dd A^{\mathbb{N}}\to A$.   
 \end{proposition}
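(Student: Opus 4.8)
The plan is to verify the two axioms required of a series monoid for the pair $(0,\mathrm{P})$: first the series magma condition \eqref{zerocond}, and then the commutativity law \eqref{sumswap}. Throughout I would exploit the two properties granted for a commutative series rig, namely that multiplication distributes over $\Sigma$ and that $0$ is a multiplicative zero. I would also repeatedly use that, since the index sets $\binom{\mathbb{N}}{n}$ and all the collections of finite subsets appearing below are countable, Proposition~\ref{commutative} and Remark~\ref{diag} let me regard each such $\Sigma$ as indexed by $\mathbb{N}$ with value independent of the chosen enumeration. In particular, combining the outer sum over $n$ with the sum over $S\in\binom{\mathbb{N}}{n}$ in \eqref{fP} rewrites $\mathrm{P}a=\Sigma_{S}\Pi_{m\in S}a_m$, where $S$ ranges over the countable set of nonempty finite subsets of $\mathbb{N}$.

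For the series magma axiom I would evaluate $\mathrm{P}$ on the sequence $b$ that equals a given $a$ at a single position $m_0$ and is $0$ elsewhere, and show $\mathrm{P}b=a$, which is exactly the commutativity of \eqref{zerocond} for $\mathrm{P}$. In the defining sum, a product $\Pi_{m\in S}b_m$ over a nonempty finite $S$ is nonzero only if every factor is nonzero, i.e. only if $S\subseteq\{m_0\}$; since $\lvert S\rvert\geq 1$ this forces $S=\{m_0\}$, with product $a$. Every other product vanishes because $0$ is a multiplicative zero, so by the zero condition \eqref{zerocond} for the original $\Sigma$ the nested sums collapse to $a$.

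The heart of the matter is \eqref{sumswap}. Given a doubly indexed family $(a_{m,n})$, I would expand $\mathrm{P}_m\mathrm{P}_n a_{m,n}$ using the rewritten form of $\mathrm{P}$: the outer $\mathrm{P}_m$ is a sum over nonempty finite sets $R$ of row indices of the finite products $\Pi_{m\in R}\bigl(\mathrm{P}_n a_{m,n}\bigr)$. Applying distributivity of the product over $\Sigma$ finitely many times (once for each extra factor in $R$) turns each such product into a sum over all choices, for every $m\in R$, of a nonempty finite set $T_m$ of column indices, with summand $\Pi_{m\in R}\Pi_{n\in T_m}a_{m,n}$. The data $(R,(T_m)_{m\in R})$ correspond bijectively to nonempty finite subsets $F\subseteq\mathbb{N}\times\mathbb{N}$ via $F=\{(m,n):m\in R,\ n\in T_m\}$, and under this bijection the summand is exactly $\Pi_{(m,n)\in F}a_{m,n}$. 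Collecting the resulting countable nest of sums into a single sum by Proposition~\ref{commutative} and Remark~\ref{diag} yields $\mathrm{P}_m\mathrm{P}_n a_{m,n}=\Sigma_{F}\Pi_{(m,n)\in F}a_{m,n}$, the sum over all nonempty finite $F\subseteq\mathbb{N}\times\mathbb{N}$.

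Finally, this last expression is visibly symmetric in the two coordinates: running the identical computation for $\mathrm{P}_n\mathrm{P}_m a_{m,n}$ produces a sum over the very same countable set of nonempty finite subsets $F$, with the same summand $\Pi_{(m,n)\in F}a_{m,n}$ (the finite product being order-independent by commutativity of the rig). Hence the two iterated sums coincide, which is \eqref{sumswap}, and $(0,\mathrm{P})$ is a series monoid. I expect the only genuine work to lie in the bookkeeping of the reindexing step, specifically in checking that the finitely many applications of distributivity and the assembly of the nested countable sums into one sum indexed by finite subsets of $\mathbb{N}\times\mathbb{N}$ are all licensed by Proposition~\ref{commutative} and Remark~\ref{diag}; once the symmetric description is in hand, \eqref{sumswap} is immediate.
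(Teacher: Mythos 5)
The paper states Proposition~\ref{Pprop} without proof, so there is no argument to compare against; your verification is correct and is the natural one: rewriting $\mathrm{P}$ as a single sum over nonempty finite subsets, checking \eqref{zerocond} via the multiplicative-zero property, and deriving \eqref{sumswap} from the symmetric description $\Sigma_F\Pi_{(m,n)\in F}a_{m,n}$ obtained by finitely many applications of distributivity plus the reindexing licensed by Proposition~\ref{commutative} and Remark~\ref{diag}. You also correctly isolate where commutativity of the multiplication enters (identifying the two groupings of the finite product over $F$), consistent with Remark~\ref{foreshadomega}'s observation that without it one only gets an $\omega$-monoid.
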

 
  \begin{remark}\label{previousexamples}
{\em Notice that each of Examples~\ref{extnat} and \ref{exreal} can be obtained using 
Proposition~\ref{Pprop} from an example of the countable supremum type of Example~\ref{exsuplattice}.
For Example~\ref{extnat}, take the sup-lattice $\mathbb{N}\cup \{\infty\}$ with addition as monoid structure.
For Example~\ref{exreal}, take the sup-lattice $[0,\infty]$ with addition as monoid structure.
Indeed, Example~\ref{exsuplattice} is obtained from itself using the monoid structure of finite sup.}   
 \end{remark}
 
 \begin{remark}\label{foreshadomega}
{\em Notice that the unit for the monoid $A$ is not needed for Proposition~\ref{Pprop}.
The formula \eqref{fP} does not require commutativity of $A$ but then we only obtain a ``non-commutative series monoid'' in a sense to be pursued in Section~\ref{omegacase}.}   
 \end{remark}
 
 \begin{remark}\label{DayRemark}
{\em As pointed out by Day \cite{Day55}, the ordered set $[0,\infty]$ is 
$*$-autonomous with multiplication as tensor product and dualizing object
the same as the tensor unit $1$, internally homming into which gives
reciprocal as the equivalence
\begin{eqnarray*}
S\dd [0,\infty]^{\mathrm{op}}\lra [0,\infty] \ . 
\end{eqnarray*}
 In fact we see that $S(\alpha)= \frac{1}{\alpha}$
 is actually the dual of each $0<\alpha <\infty$,
 while $S(0)=\infty$ and $S(\infty)=0$. 
 Day further points out that the natural
logarithm gives an inverse to a monoidal equivalence
\begin{eqnarray*}
\mathrm{exp}\dd [-\infty,\infty]\lra [0,\infty] 
\end{eqnarray*}
where the tensor product in the domain is addition, 
and therefore is $*$-autonomous.}   
 \end{remark} 
 
 Motivated by Remark~\ref{DayRemark}, we take our commutative monoid $A$
 in $\mathrm{SerMn}$ and create another copy of the set $A$ which we
 will denote by $\ell A$. The elements of $\ell A$ will be denoted by $\ell a$
 where $a\in A$. We make $\ell A$ into a commutative monoid by defining
 \begin{eqnarray}\label{plusell}
\ell a + \ell b = \ell(ab) \ \text{, } \ 0 = \ell 1 \ \text{ and } \ -\infty = \ell0.
\end{eqnarray}

By definition, if $a\le 1$ in $A$ then there exists $u\in A$ with $a+u=1$. 
We can form the geometric series $v = 1 + u + u^2 + u^3 + \dots$ in $A$;
then $av+ uv = (a+u)v = v = 1 + u (1 + u + u^2 + \dots) = 1 + uv$.
If $uv$ can be cancelled, then $v=a^{-1}$. Then we have
 \begin{eqnarray}\label{minusell}
\ell (a^{-1})  = -\ell a \ .
\end{eqnarray}
We also have some countable sums in $\ell A$: 
 \begin{eqnarray}\label{inftyplusell}
\Sigma_n\ell (1+u_n)  = \Pi_n(1+u_n) = 1 + \mathrm{P}_nu_n \ .
\end{eqnarray}
When $1$ is cancellative in the additive monoid $A$, then $1\le a$
implies $1+u = a$ for a unique $u$; in this case, \eqref{inftyplusell}
defines a sum for sequences of ``non-negative elements''
$\ell a_n = \ell (1+u_n)$ in $\ell A$.  

\section{Zeno morphisms and magnitude modules}

Given an endomorphism $f\dd A\to A$ in $\mathrm{SerMn}$, define
$\tilde{f}\dd A\to A$ by the geometric series
\begin{eqnarray}
\tilde{f} = \Sigma_{n\in \mathbb{N}}f^{\circ (n+1)}
\end{eqnarray}
in the pointwise structure on $A^A$. This $\tilde{f}\dd A\to A$ satisfies
\begin{eqnarray}\label{tildeqn}
f\circ (1_A+\tilde{f}) =  f+f\circ \tilde{f} = \tilde{f}
\end{eqnarray}
and is a morphism in $\mathrm{SerMn}$.
 
 \begin{definition} {\em A {\em Zeno morphism} in $\mathrm{SerMn}$ is an endomorphism $h\dd A\to A$ such that $\tilde{h}=1_A$.
 A {\em magnitude module} in the sense of Higgs \cite{Higgs1978} is a
 series monoid equipped with a Zeno morphism $h$.} 
\end{definition}

Magnitude modules are models of an algebraic theory; they are series monoids with an extra unary
operation satisfying one extra axiom. 

From \eqref{tildeqn}, any Zeno morphism satisfies
\begin{eqnarray}\label{half}
h + h = 1_A 
\end{eqnarray}
and so can be regarded as the operation of halving.

\begin{example}\label{exnathalf}
{\em When $A=\mathbb{N}\cup \{\infty\}$ as in Example~\ref{extnat}, there exists no Zeno morphism since \eqref{half} gives the contradiction $h(1)+h(1) = 1$.}
\end{example}

\begin{example}\label{exrealhalf}
{\em When $A=[0,\infty]$ as in Example~\ref{exreal}, the unique Zeno morphism
is defined by $h(a)=\frac{1}{2}a$ and $h(\infty)=\infty$.}
\end{example}

\begin{example}\label{measurable}
{\em Refer back to Example~\ref{pointwise} for any set $X$ and any magnitude module $A$.
The pointwise Zeno morphism makes both $A^X$ and $[X,A]$ into magnitude modules.  
For $X$ a measurable space, 
Higgs \cite{Higgs1978} observed that the measurable functions
$f\dd X\to [0,\infty]$ form a magnitude submodule of $[0,\infty]^X$ and
that magnitude module morphisms from there into $[0,\infty]$ are the
countably-additive $[0,\infty]$-valued integrals on $X$. }
\end{example}

\begin{example}\label{exsuplatticehalf}
{\em For a partially ordered set $A$ as in Example~\ref{exsuplattice}, the identity function $h(a)=a$ is Zeno. 
}
\end{example} 

\begin{theorem}[Higgs] 
The free magnitude module on a single generating element is $[0,\infty]$ as in Example~\ref{exrealhalf}. 
\end{theorem}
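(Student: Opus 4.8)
The plan is to establish the universal property directly, in the spirit of Proposition~\ref{freesmon1}: for an arbitrary magnitude module $A$ with Zeno morphism $h_A$, I will show that evaluation at the generator $1\in[0,\infty]$ is a bijection from the set of magnitude-module morphisms $[0,\infty]\to A$ onto the underlying set of $A$. Here the Zeno morphism on $[0,\infty]$ is halving (Example~\ref{exrealhalf}), so $h^k(1)=2^{-k}$. The structural fact driving everything is that every $x\in[0,\infty]$ can be written as a countable sum $x=\Sigma_n 2^{-k_n}$ of powers of two with $k_n\in\mathbb{N}$ (repetitions allowed; the integer part, possibly $\infty$, contributes copies of $2^0=1$). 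This lets me both read off what a morphism must be and write down a candidate.

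For uniqueness, suppose $\phi\dd[0,\infty]\to A$ is a morphism of magnitude modules with $\phi(1)=a$. Then $\phi(0)=0$, $\phi$ preserves $\Sigma$ and hence finite sums, and $\phi$ commutes with the Zeno morphisms by definition, so $\phi(2^{-k})=\phi(h^k(1))=h_A^k(a)$. Applying $\phi$ to a representation $x=\Sigma_n 2^{-k_n}$ and using preservation of $\Sigma$ forces $\phi(x)=\Sigma_n h_A^{k_n}(a)$. Since every element admits such a representation, $\phi$ is completely determined by $a$, giving injectivity of the evaluation map.

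For existence I define $\phi_a(x)=\Sigma_n h_A^{k_n}(a)$ by choosing any representation $x=\Sigma_n 2^{-k_n}$, and the crux is that this is independent of the choice. The only two identities I need in $A$ are (i) $h_A^k(a)+h_A^k(a)=h_A^{k-1}(a)$, which is \eqref{half} applied to $h_A^{k-1}(a)$, and (ii) $\Sigma_{j\ge k+1}h_A^{j}(a)=h_A^k(a)$, which follows from the Zeno condition $\tilde h=1_A$ — that is $\Sigma_{j\ge 1}h_A^{j}(a)=a$, cf.~\eqref{tildeqn} — by applying the morphism $h_A^k$ and using that $h_A$ preserves $\Sigma$. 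Now (i) is exactly the finite carry $2\cdot 2^{-k}=2^{-(k-1)}$ and (ii) is the infinite carry $\Sigma_{j\ge k+1}2^{-j}=2^{-k}$; these are precisely the relations identifying two countable sums of powers of two that represent the same extended real, so $\phi_a$ is well defined.

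It then remains to check that $\phi_a$ respects the structure. One has $\phi_a(0)=0$ and $\phi_a(1)=h_A^0(a)=a$; for the Zeno morphism, $h(x)=\Sigma_n 2^{-(k_n+1)}$ yields $\phi_a(h(x))=\Sigma_n h_A^{k_n+1}(a)=h_A(\Sigma_n h_A^{k_n}(a))=h_A(\phi_a(x))$ since $h_A$ preserves $\Sigma$. For preservation of $\Sigma$, given $x=\Sigma_i x_i$ with $x_i=\Sigma_n 2^{-k_{i,n}}$, generalized associativity of $\Sigma$ in $[0,\infty]$ together with the diagonal reindexing $\mathbb{N}\cong\mathbb{N}\times\mathbb{N}$ (Remark~\ref{diag}) gives the single representation $x=\Sigma_{(i,n)}2^{-k_{i,n}}$; by well-definedness $\phi_a(x)=\Sigma_{(i,n)}h_A^{k_{i,n}}(a)$, and the sum-swap valid in every series monoid (Proposition~\ref{commutative}, \eqref{sumswap}) rearranges this to $\Sigma_i\Sigma_n h_A^{k_{i,n}}(a)=\Sigma_i\phi_a(x_i)$. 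By uniqueness the assignments $\phi\mapsto\phi(1)$ and $a\mapsto\phi_a$ are mutually inverse. I expect the main obstacle to be the well-definedness claim of the third paragraph: one must show that the carry relations (i) and (ii) account for all coincidences $\Sigma_n 2^{-k_n}=\Sigma_m 2^{-l_m}$ in $[0,\infty]$, which amounts to transporting the uniqueness of binary expansions — with infinite carry chains controlled precisely by the Zeno condition — into the abstract module $A$.
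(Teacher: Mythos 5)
Your overall strategy---show that evaluation at $1$ is a bijection from the set of magnitude-module morphisms $[0,\infty]\to A$ onto $A$, with uniqueness coming from the fact that $1$ generates and existence from a formula $\phi_a(x)=\Sigma_n h_A^{k_n}(a)$ attached to a dyadic representation of $x$---is exactly the route the paper indicates and then defers to Section 4 of \cite{Higgs1978} for: every element of $[0,\infty]$ is a countable sum of elements $h^{\circ k}(1)$, and the ambiguity of binary expansions is to be absorbed by the Zeno identity $1=\tilde h(1)$. So there is no divergence of approach. The problem is that the step you yourself flag as ``the crux''---well-definedness of $\phi_a$---is asserted rather than proved, and that step \emph{is} Higgs' argument. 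It does not suffice to observe that the carry relations (i) $h_A^k(a)+h_A^k(a)=h_A^{k-1}(a)$ and (ii) $\Sigma_{j\ge k+1}h_A^{j}(a)=h_A^{k}(a)$ are ``precisely the relations identifying two countable sums of powers of two'': you must exhibit, for \emph{every} coincidence $\Sigma_n 2^{-k_n}=\Sigma_m 2^{-l_m}$ in $[0,\infty]$, an actual chain of equalities in the abstract $A$, each instance licensed by the series-monoid axioms \eqref{zerocond}, \eqref{sumswap}, Remark~\ref{diag}, together with (i) and (ii), connecting $\Sigma_n h_A^{k_n}(a)$ to $\Sigma_m h_A^{l_m}(a)$.

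Concretely, a representation amounts to a multiplicity function $c\colon\mathbb{N}\to\mathbb{N}\cup\{\infty\}$ recording how often each exponent occurs, and distinct such $c$ with the same value can differ by carry chains that propagate without bound: already $\tfrac14+\tfrac14+\tfrac14+\tfrac14=\tfrac12+\tfrac12=1$ needs two rounds of (i), and with infinitely many positions, each of possibly infinite multiplicity, the rewriting is an infinite process. In a series monoid one may not simply ``apply a relation infinitely often''; each equality must be witnessed by finitely many applications of the axioms (a single $\Sigma$ applied to a reindexed or regrouped family, a sum-swap, or one Zeno identity). What is needed, and what Higgs supplies, is a normal-form argument reducing an arbitrary $c$ to the canonical binary expansion of its value by an explicit axiom-by-axiom computation valid in $A$, not just in $[0,\infty]$. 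Until that lemma is in place, $\phi_a$ is not known to exist and the existence half of your bijection is open; the uniqueness half, and the verifications in your final paragraph granted well-definedness, are fine.
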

\begin{proof}
The proof is given in Section 4 of \cite{Higgs1978} so we shall only give an indication.
Every natural number is a finite sum $1+1+\dots +1$ while $\infty=1+1+\dots$.
Every positive real is the sum of a natural number and a real number $t$ 
in the interval $[0,1)$. However, we have the binary expansion
$t=\Sigma_{n\in \mathbb{N}}h^{\circ {m_n}}(1)$, where $m_0, m_1, \dots$
is a sequence of strictly positive integers.
Consequently, $1\in [0,\infty]$ generates. 
The fact that we have equality of binary expansions such as $1.000\dots = 0.111\dots$ is no
problem since $1=\tilde{h}(1)$.
\end{proof}

\begin{remark}\label{explicitconstruct} 
{\em The construct of the extended reals $[0,\infty]$ as a quotient of a free series monoid is as follows.
Let $\mathbb{M}= (\mathbb{N}\cup \{\infty\})^{\mathbb{N}}$ be the free series monoid on 
$\mathbb{N}$ (Proposition~\ref{freesmonall}). We have the universal morphism
$\chi \dd \mathbb{N}\to \mathbb{M}$ taking $n$ to the function $\chi_n$ which has only
non-zero value at $n$ and that value is $1$.
Take the smallest series monoid congruence $\sim$ including the relations
$$\chi_n \sim \chi_{n+1} + \chi_{n+2} + \chi_{n+3} + \dots \ .$$
A consequence of these relations is $\chi_n \sim 2 \chi_{n+1}$.
Now $\mathbb{M}/_{\sim}$ becomes a magnitude module by the Zeno function $h$
defined by $h(\chi_n)= \chi_{n+1}$, that is, $h$ is induced by successor on $\mathbb{N}$.
We have the magnitude module isomorphism 
$$\mathbb{M}/_{\sim} \ \xra{\cong} [0,\infty] \ \text{, } \ \chi_n \mapsto \frac{1}{2^n} \ .$$  
Incidentally, another way of constructing the reals from endomorphisms of $\mathbb{N}$ is explained in \cite{EffR}.
The question of how to define multiplication for any construction of the real number system is always of interest.
In \cite{EffR}, it is simply induced by composition of functions.
For decimals, it is tricky. We now turn to the multiplication in our context.}   
\end{remark}

For any series monoid $A$, by freeness there is a magnitude module morphism
$$\lambda\dd [0,\infty] \lra \mathrm{ser}(A,A)$$  
taking the generator $1\in [0,\infty]$ to the identity function of $A$;
see Section~\ref{scs}.
This gives an action 
$$\centerdot \ \dd [0,\infty] \otimes A \lra A$$
of $[0,\infty]$ on $A$ defined by $\alpha \centerdot a = \lambda(\alpha)(a)$.  
In particular, we have a monoid structure
$$\centerdot \ \dd [0,\infty]\otimes [0,\infty] \lra [0,\infty]$$
on $[0,\infty]$ in the monoidal category $\mathrm{SerMn}$; 
the unit is the generator $1$ of $[0,\infty]$.
This gives a monad $[0,\infty]\otimes -$ on $\mathrm{SerMn}$.

\begin{proposition}[Higgs]\label{EMmagmod} 
The Eilenberg-Moore algebras for the monad \\ $[0,\infty]\otimes-$ 
on $\mathrm{SerMn}$ are precisely the magnitude modules.
\end{proposition}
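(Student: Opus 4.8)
The plan is to identify the Eilenberg--Moore category first with the category of $[0,\infty]$-modules in the symmetric monoidal category $\mathrm{SerMn}$, and then with magnitude modules. Since $[0,\infty]$ is a commutative monoid for $\otimes$, the standard identification of the algebras for a monad of the form $[0,\infty]\otimes-$ tells us that an Eilenberg--Moore algebra structure on a series monoid $A$ is precisely an action $\centerdot\dd [0,\infty]\otimes A\to A$ satisfying the module axioms for the monoid $[0,\infty]$. Transposing across the closed structure of Proposition~\ref{closed structure}, such module structures correspond bijectively to monoid morphisms $\phi\dd [0,\infty]\to\mathrm{ser}(A,A)$, where $\mathrm{ser}(A,A)$ carries the monoid structure given by composition with unit $1_A$; the unit axiom becomes $\phi(1)=1_A$ and the associativity axiom becomes multiplicativity $\phi(\alpha\beta)=\phi(\alpha)\circ\phi(\beta)$. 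The task therefore reduces to exhibiting a natural bijection between such $\phi$ and Zeno morphisms $h$ on $A$.

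For the forward direction, given such a $\phi$ I would set $h=\phi(\tfrac12)$, an endomorphism of $A$ in $\mathrm{SerMn}$. Using multiplicativity we get $h^{\circ(n+1)}=\phi\bigl((\tfrac12)^{n+1}\bigr)$, and since $\phi$ preserves $\Sigma$, together with the identity $\Sigma_n(\tfrac12)^{n+1}=1$ in $[0,\infty]$, we obtain
\[
\tilde h=\Sigma_n h^{\circ(n+1)}=\phi\bigl(\Sigma_n(\tfrac12)^{n+1}\bigr)=\phi(1)=1_A,
\]
so $h$ is a Zeno morphism.

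For the reverse direction, given a Zeno morphism $h$ I would first make $\mathrm{ser}(A,A)$ into a magnitude module by the post-composition operation $H(g)=h\circ g$; because $\Sigma$ in $\mathrm{ser}(A,A)$ is pointwise and $h$ preserves $\Sigma$, one checks $\tilde H(g)=\tilde h\circ g=g$, so $H$ is Zeno. Higgs' theorem (freeness of $[0,\infty]$ on one generator) then yields a unique magnitude module morphism $\lambda\dd [0,\infty]\to\mathrm{ser}(A,A)$ with $\lambda(1)=1_A$; this is exactly the $\lambda$ of the discussion preceding the proposition. It remains to see that $\lambda$ is a monoid morphism, i.e.\ $\lambda(\alpha\beta)=\lambda(\alpha)\circ\lambda(\beta)$. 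Fix $\beta$. Multiplication by $\beta$ is a magnitude module endomorphism of $[0,\infty]$, and pre-composition $f\mapsto f\circ\lambda(\beta)$ is a magnitude module endomorphism of $(\mathrm{ser}(A,A),H)$; hence both $\alpha\mapsto\lambda(\alpha\beta)$ and $\alpha\mapsto\lambda(\alpha)\circ\lambda(\beta)$ are magnitude module morphisms $[0,\infty]\to\mathrm{ser}(A,A)$. They agree at the generator $1$ (both give $\lambda(\beta)$), so by the uniqueness clause of the universal property they coincide, proving multiplicativity.

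Finally I would check that the two assignments are mutually inverse and functorial. Commutativity of $[0,\infty]$ makes each $\phi$ automatically a magnitude module morphism into $(\mathrm{ser}(A,A),H)$ sending $1$ to $1_A$, so uniqueness gives $\phi=\lambda$; conversely $\lambda(\tfrac12)=H(\lambda(1))=h\circ 1_A=h$ recovers $h$. An Eilenberg--Moore morphism $f$ satisfies $f\circ\phi_A(\alpha)=\phi_B(\alpha)\circ f$ for all $\alpha$, which at $\alpha=\tfrac12$ is precisely the magnitude module condition $f\circ h_A=h_B\circ f$, and the converse follows by the same uniqueness argument applied to $\mathrm{ser}(A,B)$. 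The main obstacle is the associativity of the action, that is the multiplicativity of $\lambda$; everything else is a routine transport across the closed structure, and this step is exactly where the uniqueness half of Higgs' universal property does the real work.
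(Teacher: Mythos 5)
Your proposal is correct. Note first that the paper itself offers no proof of this proposition: it is stated with the attribution ``[Higgs]'' and the argument is left to Higgs' original article, so there is nothing internal to compare against; your write-up supplies a proof consistent with the surrounding text (your $\lambda$ is exactly the $\lambda$ the paper introduces just before the statement, and your action $\alpha\centerdot a=\lambda(\alpha)(a)$ is the paper's). The two reductions you make are the standard ones --- Eilenberg--Moore algebras for $M\otimes-$ are $M$-modules, and in a closed monoidal category these are monoid maps $M\to\mathrm{ser}(A,A)$ --- and the substantive content is correctly located in the multiplicativity of $\lambda$, which you obtain from the uniqueness half of Higgs' freeness theorem after checking that $\mathrm{ser}(A,A)$ with $H(g)=h\circ g$ is a magnitude module (this works because $\Sigma$ there is pointwise and $h$ preserves $\Sigma$, so $\tilde H(g)=\tilde h\circ g$) and that right multiplication by $\beta$ on $[0,\infty]$ and precomposition by $\lambda(\beta)$ on $\mathrm{ser}(A,A)$ are magnitude module endomorphisms. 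One small remark: your appeal to commutativity of $[0,\infty]$ in the last paragraph is not needed --- since $\lambda(\tfrac12)=H(\lambda(1))$ shows halving is left multiplication by $\tfrac12$, multiplicativity of $\phi$ already gives $\phi\circ h_{[0,\infty]}=H\circ\phi$ directly. The treatment of morphisms via the same uniqueness argument in $\mathrm{ser}(A,B)$ (with $K(g)=h_B\circ g$) is also sound.
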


The ``magnitude'' terminology comes from Huntingdon \cite{Hunt1902}
who took magnitudes in the unextended strictly positive reals $(0,\infty)$.
  
\begin{remark}\label{paradoxicalreals}
{\em There is also what one might call the {\em paradoxical positive reals} where the
geometric series $\frac{1}{2}+\frac{1}{4}+\frac{1}{8}+\dots $ is not $1$.
It is an example of a very general simple construction of an additive monoid structure,
on the disjoint union $\{0\}+X+S$, from any semigroup morphism $k\dd S\to X$ in $\mathrm{Set}$.
It freely adds the zero element $0$ to the semigroup $X+S$ whose addition $\mu_{X+S}$ is the composite
\begin{eqnarray*}
(X+S)\times (X+S)\xra{\cong}X\times X+X\times S+S\times X+S\times S\xra{1+1\times k+k\times 1+1} \\ 
X\times X+X\times X+X\times X+S\times S \xra{[\mu_X,\mu_X,\mu_X,\mu_S]}X+S \ .
\end{eqnarray*}
For our particular example, let $X$ be the additive semigroup of positive real numbers, presented as infinite binary expansions 
excluding those having only finitely many terms equal to $1$. 
Let $S$ be the set of all positive rational numbers of the form $\frac{m}{2^n}$, 
where n and m are integers, presented as binary expansions $s$ having only finitely many terms equal to $1$. 
Define $k\dd S\to X$ to replace the last $1$ in $s$ with a $0$ and all the later $0$s by $1$s;
for example, $k(1.00\dots)=0.11\dots$. 
Then $\{0\}+X+S$ is our monoid $\mathrm{ZP}[0,\infty)$ of paradoxical positive reals.
In there we have, for example, $$(0.11\dots) + (0.11\dots)  = (0.11\dots) +  (1.00\dots) = 1.11 \dots$$
and 
$$(1.00 \dots) + (1.00 \dots) = 10.00 \dots \ .$$ 
We note that $\mathrm{ZP}[0,\infty)$ is not only an ordered monoid, but, considered as an ordered set with $\infty$ added, is the free completion of $S$ under arbitrary joins; 
this is in contrast to the ordinary $[0,\infty]$, which is the existing-join-preserving completion.  
}
\end{remark}

\section{Series monoidal categories}\label{smamc}

Let $\CA$ be a category. 
Given an object $0$ of $\CA$, define the functor
\begin{eqnarray*}
\delta\dd \CA\lra \CA^{\mathbb{N}\times \mathbb{N}}
\end{eqnarray*}
by   
 \begin{equation*}
\delta(A)_{m,n} =
\begin{cases}
A & \text{if } m = n, \\
0 & \text{if } m\neq n.
\end{cases}
\end{equation*}
For $A\in \CA^{\mathbb{N}}$ and a functor $\Sigma\dd \CA^{\mathbb{N}}\to \CA$, note that $\delta(\Sigma_nA_n)$ 
and $\Sigma_n\delta(A_n)$ are not too different:  
 \begin{equation*}
\delta(\Sigma_nA_n)_{r,s} =
\begin{cases}
\Sigma_nA_n & \text{if } r = s, \\
0 & \text{if } r\neq s
\end{cases}
\end{equation*}
while
 \begin{equation*}
\Sigma_n\delta(A_n)_{r,s} =
\begin{cases}
\Sigma_nA_n & \text{if } r = s, \\
\Sigma(0,0,\dots) & \text{if } r\neq s \ .
\end{cases}
\end{equation*}
So, if we have an isomorphism $\lambda_00\dd 0\to \Sigma(0,0,\dots)$, then there is an induced isomorphism
$\overline{\lambda_00}\dd \delta(\Sigma_nA_n) \to \Sigma_n\delta(A_n)$ 
which is the identity on the diagonal and $\lambda_00$ elsewhere.  

\begin{definition} {\em A {\em series monoidal category} is a category 
$\CA$ equipped with an object $0\in \CA$, a functor 
\begin{eqnarray*}
\Sigma : \CA^{\mathbb{N}} \lra \CA \ , \ (A_i)_{i\in \mathbb{N}} \longmapsto \Sigma_{i\in \mathbb{N}} A_i
\end{eqnarray*}
and natural isomorphisms 
\begin{equation}\label{smoncatdata}
\begin{aligned}
\xymatrix{
\CA^{\mathbb{N}\times \mathbb{N}} \ar[d]_{\sigma_1}^(0.8){\phantom{aaaaaa}}="1" \ar[rr]^{\sigma_2}  && (\CA^{\mathbb{N}})^{\mathbb{N}} \ar[d]^{\Sigma^{\mathbb{N}}}_(0.8){\phantom{aaaaaa}}="2" \ar@{=>}"1";"2"^-{\gamma}
\\
(\CA^{\mathbb{N}})^{\mathbb{N}} \ar[d]_-{\Sigma^{\mathbb{N}}} && \CA^{\mathbb{N}} \ar[d]^-{\Sigma} & ,
\\
\CA^{\mathbb{N}} \ar[rr]_{\Sigma} & & \CA
}
\quad
\xymatrix{
\CA \ar[rd]_{\CA^!}^(0.5){\phantom{aaa}}="1" \ar[rr]^{\delta}  && (\CA^{\mathbb{N}})^{\mathbb{N}} \ar[ld]^{\Sigma^{\mathbb{N}}}_(0.5){\phantom{aaa}}="2" \ar@{=>}"1";"2"^-{\lambda}
\\
& \CA^{\mathbb{N}} \ &  
}
\end{aligned}
\end{equation}
subject to the conditions that the components
of the $\lambda_n$ at $0$ are all equal and diagrams \eqref{gammacond} 
and \eqref{lambdacond} commute.} 
\end{definition}
\begin{equation}\label{gammacond}
\begin{aligned}
\xymatrix{
& \Sigma_m\Sigma_p\Sigma_nA_{mnp} \ar[rd]^-{\phantom{A}\gamma\Sigma_n A_{\centerdot n\centerdot}}  & \\
\Sigma_m\Sigma_n\Sigma_pA_{mnp} \ar[ru]^-{\Sigma_m\gamma A_{m\centerdot\centerdot}\phantom{A}} \ar[d]_-{\gamma\Sigma_p A_{\centerdot\centerdot p}} & & \Sigma_p\Sigma_m\Sigma_nA_{mnp} \ar[d]^-{\Sigma_p\gamma A_{\centerdot\centerdot p}} \\
\Sigma_n\Sigma_m\Sigma_pA_{mnp} \ar[rd]_-{\Sigma_n\gamma A_{\centerdot n\centerdot}} & & \Sigma_p\Sigma_n\Sigma_mA_{mnp}  \\
& \Sigma_n\Sigma_p\Sigma_mA_{mnp}  \ar[ru]_-{\gamma\Sigma_m A_{m\centerdot\centerdot}} 
}
\end{aligned}
\end{equation}
\begin{equation}\label{lambdacond}
\begin{aligned}
\xymatrix{
\Sigma_m\Sigma_n\delta(A_m)_{n,p} \ar[rr]^-{\gamma\delta(A_{\centerdot})_{\centerdot p}}  && \Sigma_n\Sigma_m\delta(A_m)_{n,p} \\
\Sigma_mA_m \ar[rr]_-{\lambda_p\Sigma_mA_m} \ar[u]^-{\Sigma_m\lambda_pA_m} && \Sigma_n\delta(\Sigma_mA_m)_{n,p}  \ar[u]_-{\Sigma_n\overline{\lambda_00}}
}
\end{aligned}
\end{equation}

Just as Proposition~\ref{commutative} used \eqref{sumswap} and \eqref{zerocond}, we can use \eqref{smoncatdata} to obtain a canonical isomorphism
\begin{eqnarray}\label{xihat}
\widehat{\xi}\dd \Sigma_rA_{\xi(r)}\cong  \Sigma_nA_n 
\end{eqnarray}
for any injective function
 $\xi \dd \mathbb{N} \to \mathbb{N}$ and any
 $A\in \CA^\mathbb{N}$ with $A_n=0$ for $n$ not in the image of $\xi$.
 
 If $\xi \dd \mathbb{N} \to \mathbb{N}\times \mathbb{N}$ is an injective function and $A_{m,n}=0$ for $(m,n)$ not in the image of $\xi$,
 then we have a canonical isomorphism
 $$\widehat{\xi}\dd \Sigma_rA_{\xi(r)}\cong\Sigma_m\Sigma_nA_{(m,n)} \ .$$
 
 Clearly the dual $\CA^{\mathrm{op}}$ of a series monoidal category $\CA$
 is series monoidal with the same $\Sigma$ and $0$. 
 
 \begin{example}\label{countcoprodex}
 {\em Any category $\CA$ with countable coproducts is series monoidal with $\Sigma$ taken to be the coproduct.
 Dually, any category with countable products is series monoidal.
 For $\CA=\mathrm{SerMn}$, these two series monoidal structures coincide (by Proposition~\ref{countabledirectsum}).}
 \end{example}
 
  \begin{example}\label{exsuplatticecat}
 {\em Of course every partially ordered set is a category with at most one morphism between one object and another.
 This category structure is compatible with the series monoid structure of the countable-sup-lattice example (Example~\ref{exsuplattice}) and so gives a series monoidal category. This is actually a special case of Example~\ref{countcoprodex}.}
 \end{example}   
 
   \begin{example}\label{sermonascat}
 {\em Indeed, every series monoid $A$ is a series monoidal category by regarding it as a discrete category.
 Also $A$ is a series monoidal category by regarding it as a category using the pre-order defined in Section~\ref{sermagmon};
 Higgs \cite{Higgs1978} proves this is a partial order when $A$ is a magnitude module.}
 \end{example}  
 
 \begin{example}\label{inftenRmod}
 {\em {\blue This is not an example, although claimed to be so in the published version of this paper.} 
 Let $R$ be a commutative ring and consider the category $\mathrm{Mod}_R$ of $R$-modules.
 For $A\in \mathrm{Mod}_R^{\mathbb{N}}$ and any $R$-module $B$, define a function
 $f\dd \prod_{n\in \mathrm{N}}{A_n}\to B$ to be {\em multilinear} when each 
 $$f(a_0,a_1,\dots, a_{m-1},-,a_{m+1}, \dots)\dd A_m \to B$$
 is an $R$-module morphism.
 The representing countable tensor does not give a series monoidal structure. 
 This multiple tensor is studied in Chevalley's book [Fundamental Concepts of Algebra (Academic Press, 1956)]. 
 For $R=\mathbb{C}$, Ng \cite{Ng2013} makes some use of this tensor product, along with some variants.}
 \end{example}
 
  \begin{example}\label{inftenRsermod}
 {\em {\blue This is not an example, although claimed to be so in the published version of this paper.} 
 For a commutative monoid $R$ in the monoidal category $\mathrm{SerMn}$ (Section~\ref{scs}), 
 we purported to have a multilinear-style series monoidal structure on the category 
 $\mathrm{SerMd}_R$ of $R$-modules, and, in particular, on $\mathrm{SerMn}$. We do not.}
 \end{example}
 
  \begin{example}\label{funnyex}
 {\em For a sequence $A=(A_n)_{n\in \mathbb{N}}$ of small categories
 and a category $X$, a {\em funny functor} $f\dd A \to X$ is a function
 assigning to each object $a\in \prod_nA_n$ an object $f(a)\in X$,
 equipped with the structure of a functor $A_m \to X$ on each object
 assignment $a_m \mapsto f(a)$ with all $a_n\in A_n$ fixed for $n\neq m$.
 There is a category $\ddot\smile A =\ddot\smile_{n\in \mathbb{N}} A_n$
 such that funny functors $A\to X$ are in natural bijection with functors
 $\ddot\smile A \to X$. There is a series monoidal structure on the
 category $\mathrm{Cat}$ of small categories where $\Sigma = \ddot\smile$.     
 }
 \end{example}
 
 We now make the natural definition of series monoidal functor.
 \begin{definition}
 {\em Suppose $\CA$ and $\CX$ are series monoidal categories.
 A functor $F\dd \CA\to \CX$ is {\em series monoidal} when it is equipped
 with a morphism $\phi_0\dd 0\to F0$ in $\CX$ and a natural transformation
 with components
 $$\phi A \dd \Sigma_nFA_n \lra F\Sigma_nA_n$$
 such that diagrams \eqref{presgamma} and \eqref{preslambda} commute.
 We call $F$ {\em series strong monoidal} when $\phi$ and $\phi_0$
 are invertible. 

A series monoidal functor $M\dd 1\to \CA$ is called a {\em series monoid in $\CA$};
that is, $M$ is an object of $\CA$ equipped with morphisms $s_0\dd 0\to M$ and
$s\dd \mathbb{N}\cdot M = \Sigma (M,M,\dots )\to M$ subject to the two conditions \eqref{presgamma} 
and \eqref{preslambda} with $\phi_0=s_0$ and $\phi * = s$ for $*\in 1$.
Since series monoidal functors compose, they take series monoids to series monoids.
}
 \end{definition} 
 \begin{equation}\label{presgamma}
\begin{aligned}
\xymatrix{
& \Sigma_mF\Sigma_nA_{mn} \ar[rd]^-{\phantom{A}\phi\Sigma_n A_{\centerdot n}}  & \\
\Sigma_m\Sigma_nFA_{mn} \ar[ru]^-{\Sigma_m\phi A_{m\centerdot}\phantom{A}} \ar[d]_-{\gamma F A} & & F\Sigma_m\Sigma_nA_{mn} \ar[d]^-{F\gamma A} \\
\Sigma_n\Sigma_mFA_{mn} \ar[rd]_-{\Sigma_n\phi A_{\centerdot n}} & & F\Sigma_m\Sigma_nA_{mn}  \\
& \Sigma_nF\Sigma_mA_{mnp}  \ar[ru]_-{\phi\Sigma_m A_{m\centerdot}} 
}
\end{aligned}
\end{equation}
\begin{equation}\label{preslambda}
\begin{aligned}
\xymatrix{
F\Sigma_n\delta(A)_{n,p} \ar[rr]^-{\phi\delta(A)_{\centerdot p}}  && \Sigma_nF\delta(A)_{n,p}  \\
FA \ar[rr]_-{\lambda_pFA} \ar[u]^-{F\lambda_pA} && \Sigma_n\delta(FA)_{n,p} \ar[u]_-{\Sigma_n\overline{\phi_0}}}
\end{aligned}
\end{equation}

\begin{example}\label{sermnhom}
{\em For any series monoidal category $\CA$, the hom functor
$$\CA(-.-)\dd \CA^{\mathrm{op}}\times \CA \lra \mathrm{Set}$$
is series monoidal where the series monoidal structure on
$\mathrm{Set}$ is countable product. Here $\phi_0 \dd \mathbf{1}\to \CA(0,0)$
picks out the identity morphism of $0\in \CA$ while $\phi(C,A)$ is the effect
$$\prod_{n\in \mathbb{N}}\CA(C_n,A_n) = \CA^{\mathbb{N}}(C,A)\xra{\Sigma} \CA(\Sigma_{n\in \mathbb{N}} C_n,\Sigma_{n\in \mathbb{N}} A_n) $$
of the functor $\Sigma$ on homs.
It follows that, if $C$ is a {\em series comonoid in $\CA$} (= series monoid in $\CA^{\mathrm{op}}$)
and $A$ is a series monoid in $\CA$ (so that $(C,A)$ is a series monoid in $\CA^{\mathrm{op}}\times \CA$),
then $\CA(C,A)$ becomes a series monoid in $\mathrm{Set}$; naturally this is called {\em convolution}.}   
\end{example}

 \begin{definition}
 {\em Suppose $F, G\dd \CA \to \CX$ are series monoidal functors.
 A natural transformation $\sigma \dd F\Rightarrow G $ is {\em series monoidal} when the
 two diagrams \eqref{smonnat} commute.}
 \end{definition}
 \begin{equation}\label{smonnat}
\begin{aligned}
\xymatrix{
\Sigma_nFA_n \ar[rr]^-{\Sigma_n\sigma A_n} \ar[d]_-{\phi A} && \Sigma_nGA_n \ar[d]^-{\phi A} \\
F\Sigma_nA_n \ar[rr]_-{\sigma\Sigma A_n} && G\Sigma_nA_n}
\quad \quad
\xymatrix{
0 \ar[rd]_{\phi_0}\ar[rr]^{\phi_0}   && F0 \ar[ld]^{\sigma 0} \\
& G0  &
}
\end{aligned}
\end{equation} 

With the obvious compositions, this defines a 2-category $\mathrm{SerMnCat}$.
Write $\mathrm{SerMn_sCat}$ for the sub-2-category obtained by restricting to
the series strong monoidal functors.
The 2-category $\mathrm{SerMnCat}$ has products preserved by the forgetful
2-functor into $\mathrm{Cat}$. 
It is immediate from the definitions that:
\begin{proposition}\label{2EckHilt}
For any series monoidal category $\CA$, the functor 
$$\Sigma \dd \CA^{\mathbb{N}}\to \CA$$
is series strong monoidal.
\end{proposition}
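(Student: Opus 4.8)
The plan is to make the structure data of $\Sigma\dd \CA^{\mathbb{N}}\to\CA$ explicit and then observe that the two coherence axioms required of a series monoidal functor become, after unwinding, precisely the two axioms \eqref{gammacond} and \eqref{lambdacond} already satisfied by $\CA$. Throughout, the source $\CA^{\mathbb{N}}$ is to carry the pointwise series monoidal structure: its zero is the constant sequence $(0,0,\dots)$, its $\Sigma$ is computed termwise, and its own coherence isomorphisms are those of $\CA$ applied termwise. (That this is again a series monoidal category is checked pointwise from the corresponding facts for $\CA$.)

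First I would produce the comparison data. For $F=\Sigma$ the unit datum must be a map $\phi_0\dd 0\to F(0,0,\dots)=\Sigma(0,0,\dots)$, and I would take $\phi_0=\lambda_0 0$, the common value of the components $\lambda_p 0$ (these agree by the standing hypothesis on the $\lambda_n$ at $0$). For a typical object $A=(A^{(n)})_{n}$ of $(\CA^{\mathbb{N}})^{\mathbb{N}}$, writing $A_{n,m}$ for the $m$-th entry of the sequence $A^{(n)}$, the pointwise formulas give
\begin{equation*}
\Sigma_n F A^{(n)}=\Sigma_n\Sigma_m A_{n,m}\quad\text{and}\quad F\Sigma_n A^{(n)}=\Sigma_m\Sigma_n A_{n,m},
\end{equation*}
so the comparison $\phi A\dd \Sigma_n F A^{(n)}\to F\Sigma_n A^{(n)}$ is forced to be the swap isomorphism $\gamma$ of \eqref{smoncatdata}. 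Since $\gamma$ and $\lambda_0 0$ are isomorphisms, the functor will be series \emph{strong} monoidal once the axioms are checked, and the naturality of $\phi$ is exactly the naturality of $\gamma$.

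Next I would verify the two functor-coherence diagrams. Substituting $F=\Sigma$, $\phi=\gamma$ and the pointwise formulas into the hexagon \eqref{presgamma}, each of its six vertices becomes one of the six orderings of the triple iterated sum of a family $A_{mnp}$, and each of its edges becomes a copy of $\gamma$ inserted in the matching slot ($\Sigma_m\gamma$, $\gamma\Sigma_n$, $\Sigma_p\gamma$ along one branch and $\gamma\Sigma_p$, $\Sigma_n\gamma$, $\gamma\Sigma_m$ along the other); the resulting diagram is identical to \eqref{gammacond}. In the same way, substituting into the square \eqref{preslambda} and using $\phi_0=\lambda_0 0$ (so that the induced map $\overline{\phi_0}$ is $\overline{\lambda_0 0}$) turns \eqref{preslambda} into \eqref{lambdacond} on the nose. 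Both therefore commute by the defining axioms of the series monoidal category $\CA$, and the proof is complete.

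The only genuine work — and the one spot to be careful — is the index bookkeeping that matches the substituted diagrams \eqref{presgamma} and \eqref{preslambda} term by term with \eqref{gammacond} and \eqref{lambdacond}. Once the pointwise structure on $\CA^{\mathbb{N}}$ is written out, the identification of vertices and edges is entirely forced, which is exactly why the statement is ``immediate from the definitions.''
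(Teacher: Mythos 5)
Your proposal is correct and matches the paper's approach: the paper simply asserts the proposition is ``immediate from the definitions,'' and what you have written is exactly the expansion of that remark --- taking the pointwise structure on $\CA^{\mathbb{N}}$, choosing $\phi_0=\lambda_00$ and $\phi=\gamma$, and observing that \eqref{presgamma} and \eqref{preslambda} specialize to the axioms \eqref{gammacond} and \eqref{lambdacond} of $\CA$. Nothing further is needed.
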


Associated with this kind of ``commutativity'' of the theory is the fact that any countable product of 
series monoidal categories is also the bicategorical coproduct in $\mathrm{SerMn_sCat}$;
that is, $\mathrm{SerMn_sCat}$ has countable direct sums in the bicategorical sense.  

\begin{example}\label{Rmodadj}
{\em As this involved the non-Example~\ref{inftenRmod}, it was in error.}
\end{example}

\begin{example}\label{Rsermodadj}
{\em As this involved the non-Example~\ref{inftenRsermod}, it was also in error.}
\end{example}

For any series monoidal category $\CA$ and subset $S\subseteq \mathbb{N}$, 
we can define a functor $\Sigma_S\dd \CA^S\to \CA$ whose value at $A\in \CA^S$ is 
\begin{eqnarray}
\Sigma_{n\in S}A_n=\Sigma_{n\in \mathbb{N}}C_n
\end{eqnarray}
where
 \begin{equation*}
C_n =
\begin{cases}
A_n & \text{if }  n\in S\\
0 & \text{otherwise}.
\end{cases}
\end{equation*}
When $A_n=B$ for all $n\in S$, we also put 
\begin{eqnarray}\label{actionSonB}
S\cdot B : = \Sigma_{n\in S}A_n \ .
\end{eqnarray}
Using \eqref{xihat}, we obtain, for any bijection $\xi\dd S\to T$,
an isomorphism
\begin{eqnarray}\label{bijB}
\widehat{\xi}\dd \Sigma_{r\in S}A_{\xi(r)} \cong \Sigma_{n\in T}A_n
\end{eqnarray}
with the special case
\begin{eqnarray}\label{bijB}
\widehat{\xi}\dd S\cdot B \cong T\cdot B \ .
\end{eqnarray}
This definition transports to any bijection $\xi$ between any countable
sets $S$ and $T$ yielding a functor
\begin{eqnarray}\label{CFaction}
-\cdot - \dd \mathrm{CF}\times \CA \lra \CA , \ (S,B) \mapsto S\cdot B \ , 
\end{eqnarray}
where $\mathrm{CF}$ is the category of countable sets and all functions. 
Notice that, for $S,T\subseteq \mathbb{N}$ and $A =(A_{m,n})_{(m,n)\in S\times T}$,
the isomorphism $\gamma$ of \eqref{smoncatdata} restricts to an isomorphism
\begin{eqnarray*}
\gamma \dd \Sigma_{m\in S}\Sigma_{n\in T}A_{m,n}\cong \Sigma_{n\in T}\Sigma_{m\in S}A_{m,n} \ .
\end{eqnarray*}
When $S$ is any countable set and $A =(A_m)_{m\in T}$, this transports to an isomorphism
\begin{eqnarray}\label{SdotSigma}
\gamma \dd S\cdot \Sigma_{n\in T}A_{n}\cong \Sigma_{n\in T}S\cdot A_{n} \ .
\end{eqnarray}
For any countable $T$ and $A\in \CA$, we obtain
\begin{eqnarray*}
\gamma \dd S\cdot (T\cdot A)\cong T\cdot (S\cdot A) \ ,
\end{eqnarray*}
and this is isomorphic to $(S\times T)\cdot A$.

Write $\mathrm{ev}_S\dd S\cdot A^S\to A$ for the morphism corresponding
to the identity of $A^S$. Then each bijection $\xi\dd S\to T$ determines an
isomorphism 
\begin{eqnarray}\label{xibar}
\overline{\xi}\dd A^S\to A^T
\end{eqnarray}
which corresponds to the composite $T\cdot A^S\xra{\widehat{\xi}^{-1}\cdot 1}S\cdot A^S \xra{\mathrm{ev}_S}A$.  

\begin{proposition}\label{symmonadd}
The tensor product defined by 
\begin{eqnarray}
A_1+A_2 = \Sigma_{n\in \{1,2\}}A_n \ .
\end{eqnarray}
renders $\CA$ symmetric monoidal with $0$ as tensor unit.
Moreover, $\Sigma : \CA^{\mathbb{N}}\to~\CA$ is a symmetric strong monoidal functor.
\end{proposition}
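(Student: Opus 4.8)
The plan is to exhibit the associativity, unit, and symmetry constraints as instances of the canonical isomorphisms already built in this section --- the Fubini isomorphism $\gamma$ and the ``single-term'' isomorphism $\lambda$ of \eqref{smoncatdata}, together with the reindexing isomorphisms $\widehat\xi$ of \eqref{xihat} --- and then to reduce each coherence axiom of a symmetric monoidal category to the two series-monoidal axioms \eqref{gammacond} and \eqref{lambdacond}.

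First I would name the structure maps and check their naturality. Since $A+0=\Sigma_n\delta(A)_{n,1}$ and $0+A=\Sigma_n\delta(A)_{n,2}$ are sums with a single non-zero entry $A$, the components $\lambda_1A$ and $\lambda_2A$ give invertible unitors $r_A\dd A+0\cong A$ and $\ell_A\dd 0+A\cong A$; the stipulation in the definition that the components of the $\lambda_n$ at $0$ agree is exactly what will force $\ell_0=r_0$. The symmetry $c_{A_1,A_2}\dd A_1+A_2\to A_2+A_1$ is the reindexing isomorphism $\widehat\xi$ of \eqref{bijB} attached to the transposition $\xi$ of $\{1,2\}$, and its involutivity $c_{A_2,A_1}\circ c_{A_1,A_2}=1$ follows from the functoriality $\widehat{\xi\eta}=\widehat\xi\,\widehat\eta$ of reindexing, since the transposition squares to the identity. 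For the associator I would flatten both bracketings onto the common \emph{ordered} ternary sum $\Sigma_{n\in\{1,2,3\}}(A_1,A_2,A_3)$: writing $(A_1+A_2)+A_3$ as the iterated sum $\Sigma_m\Sigma_nH_{m,n}$ of the $\{1,2\}\times\{1,2\}$-supported family $H_{1,1}=A_1,\,H_{1,2}=A_2,\,H_{2,1}=A_3$ (using $\lambda$ to fill the singleton inner row), and likewise for $A_1+(A_2+A_3)$, each is identified with the ternary sum by an instance of the $\mathbb{N}\times\mathbb{N}\to\mathbb{N}$ reindexing noted after \eqref{xihat}; the associator is the resulting composite, and notably no application of $\gamma$ is needed here since both bracketings present the summands in the order $A_1,A_2,A_3$. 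Naturality of all four constraints is then immediate, as each is assembled from $\gamma$, $\lambda$ and the reindexing isomorphisms, all of which are natural by the $2$-cell data of \eqref{smoncatdata} and by functoriality of the subset sums $\Sigma_S$ and of $-\cdot-$ in \eqref{CFaction}.

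The real content, and where I expect the main obstacle, is the verification of the coherence axioms, which amounts to a finite instance of a coherence theorem for series monoidal categories: every diagram of canonical isomorphisms between two rearrangements of a sum commutes once \eqref{gammacond} and \eqref{lambdacond} hold. Concretely, after expanding each constraint into its $\gamma$/$\lambda$/reindexing constituents, I expect the Mac~Lane pentagon and the triangle to reduce to the compatibility \eqref{lambdacond} of $\lambda$ with $\gamma$ (together with the reindexing coherence that governs the unit-fillings), while the two hexagons relating $c$ to the associator reduce to the hexagon \eqref{gammacond} governing the six orderings of a triple sum, which is precisely the relation making iterated applications of $\gamma$ consistent. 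The genuine labour is the bookkeeping that turns each biased axiom into the corresponding unbiased commuting diagram; none of it should require anything beyond \eqref{gammacond} and \eqref{lambdacond}.

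Finally, for the moreover clause I would invoke Proposition~\ref{2EckHilt}, by which $\Sigma\dd\CA^{\mathbb{N}}\to\CA$ is series strong monoidal. Because the binary tensor $+$, the unit $0$, the unitors, $\alpha$ and $c$ on both $\CA^{\mathbb{N}}$ (computed pointwise) and $\CA$ are produced from the series operation $\Sigma$ and the object $0$ by the one canonical recipe above, the strong comparison $\phi$ witnessing Proposition~\ref{2EckHilt} restricts to the binary comparison $\Sigma A+\Sigma B\cong\Sigma(A+B)$ --- itself an instance of $\gamma$ --- and to a unit comparison $0\cong\Sigma 0$. Its compatibility with $\alpha$ and the unitors is the restriction of \eqref{presgamma} and \eqref{preslambda}, and its compatibility with $c$ holds because $\phi$ is built from $\gamma$, whose coherence \eqref{gammacond} subsumes the symmetry. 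Hence $\Sigma$ is symmetric strong monoidal.
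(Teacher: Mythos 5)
The paper states Proposition~\ref{symmonadd} without any proof, so there is no argument of the authors to measure yours against; your strategy --- unitors as components of $\lambda$, symmetry as the reindexing isomorphism $\widehat{\xi}$ of the transposition, associator as the two flattenings of the bracketed sums onto the common ternary sum via the $\mathbb{N}\to\mathbb{N}\times\mathbb{N}$ form of \eqref{xihat}, and then reduction of the Mac~Lane axioms to \eqref{gammacond} and \eqref{lambdacond} --- is surely the intended one, and your treatment of the ``moreover'' clause via Proposition~\ref{2EckHilt} and the instance $\Sigma_{m\in\{1,2\}}\Sigma_nC_{m,n}\cong\Sigma_n\Sigma_{m\in\{1,2\}}C_{m,n}$ of $\gamma$ is right.

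One step deserves more than your appeal to ``functoriality $\widehat{\xi\eta}=\widehat{\xi}\,\widehat{\eta}$ of reindexing'': that functoriality is itself a coherence statement that must be extracted from the two axioms, and it is precisely where the difference between \emph{symmetric} and merely \emph{braided} lives. The hexagon \eqref{gammacond} is a Yang--Baxter-type relation between the two reduced decompositions of the order-reversal of a triple sum, and on its own it would not force $c_{A_2,A_1}\circ c_{A_1,A_2}=1$; a bare $\gamma$ satisfying only \eqref{gammacond} could be a nontrivial braiding. What rescues the claim is that $\widehat{\xi}$ is obtained by applying $\gamma$ only to families $b_{m,n}$ having at most one non-zero entry in each row and each column, and on such families $\gamma$ is completely pinned down by \eqref{lambdacond}: for a single non-zero entry the matrix is literally of the form $\delta(A_{\centerdot})_{\centerdot p}$ appearing in \eqref{lambdacond}, and for a permutation-supported matrix one slices it along a third index into single-entry layers and invokes \eqref{gammacond} to reassemble. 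Once $\gamma$ is so determined on these matrices, $\gamma_{b^{T}}\circ\gamma_{b}=1$, hence the involutivity of $c$ and, more generally, the functoriality of $\xi\mapsto\widehat{\xi}$ that all your coherence reductions quietly use. I would make this step explicit; the remainder of your outline is routine bookkeeping of exactly the kind you describe.
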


  For series monoidal categories $\CA$ and $\CX$, the category $\mathrm{SerMn_sCat}(\CA,\CX)$
  is series monoidal under the pointwise series monoidal structure; we write $\mathrm{Ser}(\CA,\CX)$ for
  this series monoidal category. 
  For a sequence $F=(F_n)_{n\in \mathbb{N}}$ of series strong monoidal functors $F_n\dd \CA\to \CX$,
  the definition of $\Sigma F$ is the composite
  \begin{eqnarray}\label{smcinthom}
\CA \xra{(F_n)}\CX^\mathbb{N}\xra \Sigma \CX \ .
\end{eqnarray}

The forgetful 2-functor 
\begin{eqnarray}\label{SerMnCatU}
\mathrm{U} \dd \mathrm{SerMn_sCat}\to \mathrm{Cat}
\end{eqnarray}
  is monadic in a bicategorical sense. In particular, it has a left biadjoint (see \cite{BKP}
  for this sort of result) whose value
  at the terminal category can be made explicit.
 
 \begin{proposition}\label{freesmoncat1}
 The 2-functor \eqref{SerMnCatU} is pseudo-representable by the series monoidal
 category $\mathrm{CB}$ with countable sets as objects, bijective functions as morphisms, 
 and disjoint union as $\Sigma$. 
 To be precise, for any series monoidal category $\CA$, the category
 of series strong monoidal functors $\mathrm{CB}\to \CA$
 is pseudonaturally equivalent to the category $\CA$.
 A series monoidal equivalence 
 $$\mathrm{Ser}(\mathrm{CB},\CA) \simeq \CA \ ,$$
defined by evaluation at the singleton set, follows therefrom.
  \end{proposition}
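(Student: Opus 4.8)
This is the series-monoidal categorification of Proposition~\ref{freesmon1}: just as $\mathbb{N}\cup\{\infty\}$ is the free series monoid on one generator, the groupoid $\mathrm{CB}$ of countable sets and bijections ought to be the free series monoidal category on one object, the singleton $1$ (compare the classical fact that finite sets and bijections form the free symmetric monoidal category on one object). The plan is to prove the asserted pseudonatural equivalence by exhibiting, for each series monoidal category $\CA$, an explicit pseudo-inverse to the evaluation functor $\mathrm{ev}_1\dd \mathrm{Ser}(\mathrm{CB},\CA)\to\CA$, $F\mapsto F1$, and then to observe that $\mathrm{ev}_1$ is strictly compatible with the pointwise series monoidal structures, so that the resulting equivalence is automatically series monoidal.

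First I would construct the pseudo-inverse. For $A\in\CA$, restricting the action functor \eqref{CFaction} to bijections gives $F_A\dd\mathrm{CB}\to\CA$, $S\mapsto S\cdot A$, with $F_A(\xi)=\widehat{\xi}$ for a bijection $\xi$; functoriality is inherited from that of the action. Its series strong monoidal structure is essentially forced: $\phi_0\dd 0\to\emptyset\cdot A$ is the component of $\lambda$ at $0$ (so that $\emptyset\cdot A=\Sigma(0,0,\dots)\cong 0$), while the structure isomorphism $\phi\dd\Sigma_n(S_n\cdot A)\to(\Sigma_nS_n)\cdot A$ is the Fubini-type isomorphism identifying an iterated sum with a sum over a disjoint union, built from $\gamma$ and the reindexing isomorphisms $\widehat{\xi}$ exactly as in the paragraph following \eqref{xihat}. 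On the unit side, $\mathrm{ev}_1F_A=1\cdot A$ is canonically isomorphic to $A$ via the component $\lambda_nA\dd A\cong\Sigma_m\delta(A)_{m,n}=1\cdot A$, and these are natural in $A$, giving one triangle of the equivalence. For the other, given any series strong monoidal $F$ with $A=F1$, the canonical decomposition $S\cong\Sigma_{s\in S}1$ of each countable set into its singletons, together with the structure map $\phi$ of $F$, yields an isomorphism $F_A\cong F$ whose monoidality and naturality in $S$ follow from the coherence of $\phi$.

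Next I would read off fully faithfulness of $\mathrm{ev}_1$, which amounts to the statement that series monoidal natural transformations are determined by, and may be freely prescribed at, the object $1$. Given series strong monoidal $F,G\dd\mathrm{CB}\to\CA$ and a morphism $\alpha\dd F1\to G1$, functoriality of the action in its $\CA$-variable produces $S\cdot\alpha\dd S\cdot F1\to S\cdot G1$; conjugating by the isomorphisms $F\cong F_{F1}$ and $G\cong G_{G1}$ of the previous step defines components $\sigma_S$, and the diagrams \eqref{smonnat} hold because $S\cdot\alpha$ is transported coherently through the Fubini isomorphism $\phi$. Uniqueness is forced by the first diagram of \eqref{smonnat}: compatibility with $\phi$ expresses $\sigma_S$ in terms of $\sigma_1=\alpha$ via the $\Sigma$-decomposition of $S$. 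Essential surjectivity being clear ($A=\mathrm{ev}_1F_A$), $\mathrm{ev}_1$ is an equivalence. Pseudonaturality in $\CA$ is then routine: a series strong monoidal $H\dd\CA\to\CA'$ commutes with $\mathrm{ev}_1$ up to the coherent isomorphism $H(S\cdot A)\cong S\cdot HA$ supplied by the monoidal structure of $H$. Finally, since the series monoidal structure on $\mathrm{Ser}(\mathrm{CB},\CA)$ is pointwise and $\Sigma$ of a sequence of functors is computed objectwise as in \eqref{smcinthom}, the functor $\mathrm{ev}_1$ strictly preserves $\Sigma$ and $0$; being an equivalence, it is a series monoidal equivalence, as claimed.

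The main obstacle is the coherence verification for $F_A$, namely checking \eqref{presgamma} and \eqref{preslambda} for the pair $(\phi_0,\phi)$ defined above. I expect this to reduce, after unwinding the Fubini isomorphism into its constituent instances of $\gamma$ and $\widehat{\xi}$, to the two coherence axioms \eqref{gammacond} and \eqref{lambdacond} of $\CA$, together with the naturality of $\lambda$ and the bookkeeping identities for the reindexing isomorphisms $\widehat{\xi}$. The remaining verifications—functoriality of $F_A$ and naturality of the two comparison isomorphisms—are then formal consequences of the functoriality of the action \eqref{CFaction} already established.
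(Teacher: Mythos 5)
Your proposal is correct and follows essentially the same route as the paper: the pseudo-inverse to evaluation at the singleton is the functor $A\mapsto(-\cdot A)$ obtained by restricting the action \eqref{CFaction} to bijections, with series strong monoidal structure given by the Fubini-type isomorphisms built from $\gamma$, $\lambda$ and the reindexings $\widehat{\xi}$. The paper's proof is only a two-sentence sketch of this construction, so your more detailed account of the triangle isomorphisms, full faithfulness, and the coherence checks \eqref{presgamma}--\eqref{preslambda} simply fills in what the authors leave to the reader.
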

   \begin{proof}
   Given an object $A\in \CA$, we define a series strong monoidal functor
   $F\dd \mathrm{CB}\to\CA$ with $F\mathbf{1}=A$ as follows. Define 
   $F-=-\cdot A$ as per \eqref{CFaction} with series monoidal structure
supplied by \eqref{SdotSigma}. 
The assignment $A\mapsto F$ is the object function for a functor 
$\CA\to \mathrm{Ser}(\mathrm{CB},\CA)$ defined on morphisms
by universality. 
This provides the inverse equivalence to evaluation at $\mathbf{1}$.
    \end{proof}
  We also have the 2-functor
  \begin{eqnarray}\label{sermn}
\mathrm{sermn}\dd \mathrm{SerMn_sCat} \lra \mathrm{Cat}
\end{eqnarray}
which takes each series monoidal category $\CA$ to the category 
\begin{eqnarray*}
\mathrm{sermn}\CA = \mathrm{SerMnCat}(\mathbf{1},\CA)
\end{eqnarray*}
of series monoids in $\CA$.

  \begin{proposition}\label{freesmoncat2}
 The 2-functor \eqref{sermn} is pseudo-representable by the series monoidal
 category $\mathrm{CF}$ with countable sets as objects, functions as morphisms, 
 and disjoint union (coproduct) as $\Sigma$. 
 To be precise, an equivalence of categories 
 $$\mathrm{SerMn_sCat}(\mathrm{CF},\CA) \simeq \mathrm{sermn}\CA \ ,$$
 pseudonatural in series monoidal categories $\CA$.
  \end{proposition}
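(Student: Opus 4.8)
The plan is to exhibit the singleton $\mathbf{1}\in\mathrm{CF}$ as the generic series monoid and to show that evaluation at it is the claimed equivalence, in exact parallel with Proposition~\ref{freesmoncat1}: there $\mathrm{CB}$ (bijections only) pseudo-represents the underlying-object $2$-functor, and the passage from $\mathrm{CB}$ to $\mathrm{CF}$ --- adjoining all functions to the bijections --- is precisely what upgrades ``object'' to ``series monoid'' (this is the countable analogue of the fact that finite sets with all functions form the PROP for commutative monoids, while finite sets with bijections form the PROP for bare objects). First I would equip $\mathbf{1}$ with a series monoid structure in $\mathrm{CF}$: take $s_0$ to be the unique function $0=\emptyset\to\mathbf{1}$ and $s$ the unique function $\mathbb{N}\cdot\mathbf{1}=\mathbb{N}\to\mathbf{1}$. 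Since $\mathbf{1}$ is terminal in $\mathrm{CF}$, every morphism into it is unique, so the coherence conditions \eqref{presgamma} and \eqref{preslambda} hold automatically. Because series monoidal functors carry series monoids to series monoids, $F\mapsto F\mathbf{1}$ with its transported structure defines the evaluation functor $\mathrm{ev}_{\mathbf{1}}\dd\mathrm{SerMn_sCat}(\mathrm{CF},\CA)\to\mathrm{sermn}\,\CA$ that I claim is an equivalence.

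Next I would build a pseudo-inverse $\widehat{(-)}$. Given a series monoid $(M,s_0,s)$ in $\CA$, set $\widehat{M}(S)=S\cdot M$ on objects, using the action \eqref{CFaction}. The new content, relative to Proposition~\ref{freesmoncat1}, is the extension to non-bijective morphisms. For a function $f\dd S\to T$ I would regroup the sum over the fibres, $S\cdot M\cong\Sigma_{t\in T}\bigl(f^{-1}(t)\cdot M\bigr)$, via \eqref{xihat} and \eqref{SdotSigma}, and then fold each fibre into $M$. The fold $\nabla_{S'}\dd S'\cdot M\to M$ attached to a countable set $S'$ is built uniformly from $s_0$ and $s$: choosing an injection $S'\hookrightarrow\mathbb{N}$, present $S'\cdot M$ by \eqref{xihat} as $\Sigma_{n\in\mathbb{N}}B_n$ with $B_n=M$ on the image and $B_n=0$ elsewhere, send each $B_n\to M$ by $1_M$ or by $s_0$ accordingly, and post-compose with $s\dd\mathbb{N}\cdot M\to M$. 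Setting $\widehat{M}(f)=\bigl(\Sigma_{t\in T}\nabla_{f^{-1}(t)}\bigr)\circ(\text{regrouping})$ gives the action on morphisms; the series strong monoidal structure is supplied, as in Proposition~\ref{freesmoncat1}, by \eqref{SdotSigma} for $\phi$ and by $\emptyset\cdot M=0$ for $\phi_0$.

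It then remains to verify: that each $\nabla_{S'}$ is independent of the chosen injection and agrees in the boundary cases ($\nabla_{\emptyset}=s_0$, and $\nabla=s$ on constant families), which follows from \eqref{preslambda} and commutativity \eqref{xihat}; that $\widehat{M}$ respects composition and identities, so is a genuine functor; that \eqref{presgamma} and \eqref{preslambda} hold for $\widehat{M}$; and that the two round-trips are natural isomorphisms. The composite $\mathrm{ev}_{\mathbf{1}}\widehat{M}=\mathbf{1}\cdot M\cong M$ is immediate and respects the monoid structure by construction. Conversely, for series strong monoidal $F$, the isomorphisms $FS\cong\Sigma_{s\in S}F\mathbf{1}=S\cdot F\mathbf{1}$ coming from the strong structure assemble into a series monoidal natural isomorphism $\widehat{F\mathbf{1}}\cong F$, since $F$ is forced on all of $\mathrm{CF}$ by its value at the generator $\mathbf{1}$. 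Pseudonaturality in $\CA$ is then formal, each series strong monoidal $H\dd\CA\to\CA'$ intertwining the two constructions up to the coherent isomorphisms $H(S\cdot M)\cong S\cdot HM$.

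I expect the one genuinely substantial point to be functoriality of $\widehat{M}$ --- that $\widehat{M}(g\circ f)=\widehat{M}(g)\circ\widehat{M}(f)$ --- which is exactly an ``associativity and unitality of the fold'' statement. This is where the series monoid axioms on $M$ are consumed: reshuffling the iterated fibres of a composite invokes the ambient $\gamma$-coherence \eqref{gammacond} together with the monoid associativity axiom \eqref{presgamma}, while the padding of finite and empty fibres by $0$ and the correct use of $s_0$ is governed by \eqref{lambdacond} and the monoid unit axiom \eqref{preslambda}. The mild bookkeeping nuisance is that the fibres of a countable-to-countable function may independently be empty, finite, or infinite, so the single statement of functoriality secretly packages several cases; presenting every fibre uniformly as the image of an injection $\mathbb{N}\to\mathbb{N}$ padded by zeros, as in \eqref{xihat}, is the cleanest route around the obstacle.
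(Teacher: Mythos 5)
Your proposal is correct and follows essentially the same route as the paper: both build the pseudo-inverse by sending a series monoid $M$ to $S\mapsto S\cdot M$, define the fold $S'\cdot M\to M$ by padding with $0$'s, inserting $s_0$ off the image of a chosen injection into $\mathbb{N}$, and post-composing with $s$, and then act on a general function by regrouping over its fibres (the paper phrases this as factoring through a bijection followed by an order-preserving map, which is the same regrouping). The paper likewise defers the functoriality and coherence verifications to the familiar finite/commutative-monoid case, so your flagging of functoriality of $\widehat{M}$ as the substantive remaining check matches its level of detail.
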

   \begin{proof}
   Given a series monoid $A\in \CA$, we have the series strong monoidal functor
   $F\dd \mathrm{CB}\to\CA$ with $F\mathbf{1}=A$ as in Proposition~\ref{freesmoncat1}. 
   Using the series monoid structure $s_0\dd 1\to A$, $s\dd \mathbb{N}\cdot A\to A$ on $A$,
   we can extend $F$ to a series strong monoidal functor $F'\dd \mathrm{CF}\to\CA$ as
   follows. For any $S\subseteq \mathbb{N}$, let $A_n=A$ for all $n\in \mathbb{N}$, 
   let $C_n=A$ for all $n\in S$, let $C_n=1$ for all $n\notin S$,
   and let $u_n\dd C_n \to A_n$ be the identity of $A$ for $n\in S$ and $s_0$ otherwise.     
   We can define 
   \begin{eqnarray*}
\left(S\cdot A\xra{s_S} A\right) = \left( \Sigma_nC_n\xra{\Sigma_nu_n}\Sigma_nA_n\xra{s}A\right) \ .
\end{eqnarray*}
For any order-preserving function $\alpha \dd S\to T$ between subsets of $\mathbb{N}$, we obtain
$$(S\cdot A \xra{s_{\alpha}}T\cdot A) : = (\Sigma_{n\in T}\alpha^{-1}(n)\cdot A\xra{\Sigma_{n\in T}s_{\alpha^{-1}(n)}}\Sigma_{n\in T}A) \ .$$ 
For a bijective $\xi\dd S\to S$, we already have $\widehat{\xi}\dd S\cdot A\cong S\cdot A$ as in \eqref{bijB}.
As every function $\alpha\dd S\to T$ is a composite of an automorphism $\alpha_1 \dd S\to S$ and an order-preserving 
function $\alpha_2 \dd S\to T$, we obtain
$$F\alpha = s_{\alpha} = s_{\alpha_2}\circ \widehat{\alpha_1}\dd S\cdot A \to T\cdot A$$ 
in $\CA$.
The remaining details of the proof that this gives an inverse equivalence are as for finite sets, 
symmetric monoidal categories, and commutative monoids.
\end{proof}
    
 As a bicategory, $\mathrm{SerMn_sCat}$ is symmetric closed monoidal in the sense of
 \cite{60}. There is a tensor product $\CA\otimes \CB$ satisfying pseudonatural equivalences
 \begin{align}\label{Ser symmon}
\mathrm{SerMn_sCat}(\CA,\mathrm{Ser}(\CB , \CX)) &  \simeq \mathrm{SerMn_sCat}(\CA \otimes \CB, \CX) \nonumber \\ 
&  \simeq \mathrm{SerMn_sCat}(\CB,\mathrm{Ser}(\CA,\CX)) \ . 
\end{align}

A {\em diagonal} for an object $A$ of a series monoidal category $\CA$
is a morphism $d\dd A\lra \Sigma(A,A,\dots )$ such that, for all bijections 
$\xi\dd \mathbb{N}\to \mathbb{N}\times \mathbb{N}$, the following square commutes.
\begin{equation}
\begin{aligned}
\xymatrix{
A \ar[rr]^-{d} \ar[d]_-{d} && \Sigma(A,A,\dots ) \ar[d]^-{\widehat{\xi}} \\
\Sigma(A,A,\dots ) \ar[rr]_-{\Sigma (d,d,\dots )} && \Sigma (\Sigma(A,A,\dots) ,\Sigma(A,A,\dots ),\dots )}
\end{aligned}
\end{equation}

\begin{definition}
{\em A {\em magnitude module} $M$ in a series monoidal category $\CA$ is a series monoid equipped
with a diagonal morphism $d\dd M\lra \Sigma(M,M,\dots )$ and a series monoid endomorphism $h\dd M\to M$
such that the composite
\begin{eqnarray}
\tilde{h}\dd M\xra{d}\Sigma(M,M,\dots )\xra{(h,h\circ h,h\circ h\circ h, \dots)}\Sigma(M,M,\dots )\xra{s}M
\end{eqnarray}
is the identity of $M$. }
\end{definition}
   
 \section{Zeno functors and magnitude categories}\label{Zemc}
 
 Let $F\dd \CA \to \CA$ be a series monoidal endofunctor on the series monoidal
 category $\CA$.
 For each $n\in \mathbb{N}$, we have the $n$-fold composite series monoidal endofunctor
 $$F^{\circ n} = \overbrace{F\circ F \circ \dots F}^n \dd \CA\lra \CA \ .$$ 
By the product property of $\CA^{\mathbb{N}}$ in $\mathrm{SerMnCat}$,
a series monoidal functor 
$$F^{\circ (\centerdot + 1)} \dd \CA\lra \CA^{\mathbb{N}}$$
is induced. 
This composes with the series strong monoidal functor $\Sigma$ of
Proposition~\ref{2EckHilt} to yield a series monoidal functor
\begin{eqnarray}
\tilde{F}=\Sigma_{n\in \mathrm{N}}F^{\circ (n + 1)} \dd \CA\lra \CA \ .
\end{eqnarray}
There are canonical natural isomorphisms
\begin{eqnarray}\label{canonH}
F + F\circ \tilde{F} \cong \tilde{F} \ \text{ and } \  \tilde{F} \circ F\cong F\circ \tilde{F} \ .
\end{eqnarray}
 \begin{definition} {\em A {\em Zeno functor} on $\CA$ is a series strong monoidal functor $H\dd \CA\to \CA$ 
 equipped with a series monoidal isomorphism $\kappa\dd 1_{\CA}\cong \tilde{H}$ such that \eqref{wellpt} commutes.
 A {\em magnitude category} is a
 series monoidal category equipped with a Zeno functor $H$.} 
\begin{eqnarray}\label{wellpt}
\begin{aligned}
\xymatrix{
H \ar[rd]_{\kappa \circ H}\ar[rr]^{H\circ \kappa }   && H\circ \tilde{H}  \\
& \tilde{H} \circ H \ar[ru]_{\text{\eqref{canonH}}} &
}
\end{aligned}
\end{eqnarray}
\end{definition}

\begin{example}\label{exnathalfcat}
{\em When $\CA=\mathrm{CB}$ or $\CA=\mathrm{CF}$, there exists no Zeno functor since 
$1$ is not isomorphic to a disjoint union of a set with itself.}
\end{example}

\begin{example}\label{exrealhalfcat}
{\em Any magnitude module, either as a discrete category or with its partial order,
is a magnitude category.}
\end{example}

\begin{example}\label{functormagcats}
{\em For any category $\CC$ and magnitude category $\CA$, the functor category
$[\CC,\CA]$ is a magnitude category with the pointwise structure.
If $\CC$ is serial monoidal then $\mathrm{Ser}(\CC,\CA)$ is a magnitude subcategory
of $[\CC,\CA]$.}
\end{example}

 \begin{definition} {\em A {\em magnitude functor} $F\dd \CA \to \CX$ is a
 series monoidal functor equipped with a 
 series monoidal natural transformation $\nu_1\dd H\circ F\Rightarrow F\circ H$
 compatible with the series monoidal isomorphisms $\kappa\dd 1_{\CA}\cong \tilde{H}$
 in the sense that \eqref{magfuncond} should commute.
 It is {\em strong} when it is series strong monoidal and $\nu_1$ is invertible.} 
\end{definition}

The natural transformation $\nu_1\dd H\circ F\Rightarrow F\circ H$ inductively determines
natural transformations $\nu_n\dd H^{\circ n}\circ F\Rightarrow F\circ H^{\circ n}$ via
\begin{eqnarray*}
\nu_{n+1}\dd H^{\circ (n + 1)}\circ F\xra{H^{\circ n} \nu_1}H^{\circ n}\circ F\circ H\xra{\nu_n\circ H}F\circ H^{\circ (n + 1)} \ ,
\end{eqnarray*}
and hence a natural transformation 
$$\tilde{\nu} = \Sigma_{n>0}\nu_n \dd F\circ \tilde{H}\Rightarrow \tilde{H}\circ F \ .$$ 
We ask commutativity of
\begin{eqnarray}\label{magfuncond}
\begin{aligned}
\xymatrix{
F \ar[rd]_{\kappa \circ F}\ar[rr]^{F\circ \kappa }   && F\circ \tilde{H}  \\
& \tilde{H} \circ F \ar[ru]_{\tilde{\nu}} & \ .
}
\end{aligned}
\end{eqnarray}
\begin{example}
{\em For a magnitude category $\CA$, the Zeno functor $H\dd \CA\to \CA$ is a magnitude functor
with $\nu_1=1_{H\circ H}$.}
\end{example}

\begin{definition}
 {\em Suppose $F, G\dd \CA \to \CX$ are magnitude functors.
 A {\em magnitude natural transformation} $\sigma \dd F\Rightarrow G$ is a series monoidal natural
 transformation for which the following square commutes.
 
 \begin{equation*}
\xymatrix{
H\circ F \ar[rr]^-{\nu_1} \ar[d]_-{H\sigma} && F\circ H \ar[d]^-{\sigma H} \\
H\circ G \ar[rr]_-{\nu_1} && G\circ H}
\end{equation*}} 
\end{definition}

With the obvious compositions, this defines a 2-category $\mathrm{MgnCat}$ of
magnitude categories, magnitude functors and magnitude natural transformations. 
We write $\mathrm{Mgn_sCat}$ for the sub-2-category obtained by restricting
to strong magnitude functors.

If $\CC$ and $\CA$ are magnitude categories then the category 
$\mathrm{Mgn}(\CC,\CA)$ 
of strong magnitude functors and magnitude natural transformations is
a magnitude subcategory of $\mathrm{Ser}(\CC,\CA)$.

The countable direct sums of $\mathrm{SerMn_sCat}$ restrict to $\mathrm{Mgn_sCat}$;
that is, countable products restrict and the appropriate coproduct injections are magnitude functors.  

The forgetful 2-functor $$\mathrm{U} \dd \mathrm{Mgn_sCat}\to \mathrm{Cat}$$
is monadic in the bicategorical sense. In particular, it has a left biadjoint.
  
 \begin{definition}
 {\em The value at the terminal category of the left biadjoint to $\mathrm{U}$
 is called {\em the magnitude groupoid of positive real sets} and denoted
 by $\mathrm{RSet}_{\mathrm{g}}$. } 
\end{definition}

Since $\mathrm{RSet}_{\mathrm{g}}$ is series monoidal, by Proposition~\ref{freesmoncat1}
there is a series strong monoidal functor
$$\mathrm{I}\dd \mathrm{CB}\lra \mathrm{RSet}_{\mathrm{g}} $$ 
from the series monoidal category of countable sets for which 
$\mathrm{I}(\mathbf{1})$ is the generator of $\mathrm{RSet}_{\mathrm{g}}$,
which generator we shall also denote by $\mathbf{1}$.
Indeed, we shall put $\mathbf{n}\dd =\mathrm{I}(\mathbf{n})$.
We conjecture that $\mathrm{I}$ is faithful. 

There is also a strong magnitude functor
 $$\mathrm{\#}\dd \mathrm{RSet}_{\mathrm{g}} \lra [0,\infty] \ ,$$
 called {\em cardinality}, taking the generator $\mathbf{1}$ to the real number $1$.
 It follows that the composite
 $$\mathrm{CB}\xra{\mathrm{I}} \mathrm{RSet}_{\mathrm{g}} \xra{\#} [0,\infty]$$
 takes each countable set to its cardinality.
 
 For any magnitude category $\CA$, by freeness there is a strong magnitude functor
$$L\dd \mathrm{RSet}_{\mathrm{g}} \lra \mathrm{Mgn}(\CA,\CA)$$  
taking the generator $\mathbf{1} \in \mathrm{RSet}_{\mathrm{g}}$ to the identity functor of $\CA$;
see \eqref{smcinthom}.
This gives an action 
$$\bullet \ \dd \mathrm{RSet}_{\mathrm{g}}\otimes \CA \lra \CA$$
of $\mathrm{RSet}_{\mathrm{g}}$ on $\CA$ defined by $S\bullet A = (LS)A$.  
In particular, we have a monoidal structure
\begin{eqnarray}\label{multRSet}
\bullet \ \dd \mathrm{RSet}_{\mathrm{g}}\otimes \mathrm{RSet}_{\mathrm{g}} \lra \mathrm{RSet}_{\mathrm{g}}
\end{eqnarray}
on $\mathrm{RSet}_{\mathrm{g}}$ in the monoidal bicategory $\mathrm{SerMn_sCat}$; 
the unit is the generator $\mathbf{1}$ of $\mathrm{RSet}_{\mathrm{g}}$.

\begin{proposition} 
The pseudoalgebras for the pseudomonad $\mathrm{RSet}_{\mathrm{g}}\otimes -$
on $\mathrm{SerMn_sCat}$ are the magnitude categories. 
\end{proposition}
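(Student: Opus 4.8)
The plan is to argue exactly as in the one-dimensional Proposition~\ref{EMmagmod}, transposing through the closed structure and then invoking the universal property of $\mathrm{RSet}_{\mathrm{g}}$, with every equation now holding up to coherent isomorphism. First I would unwind the notion of pseudoalgebra. Using the symmetric closed structure of $\mathrm{SerMn_sCat}$ with $\CX=\CA$, an action $a\dd \mathrm{RSet}_{\mathrm{g}}\otimes \CA\to \CA$ transposes to a series strong monoidal functor $\hat a\dd \mathrm{RSet}_{\mathrm{g}}\to \mathrm{Ser}(\CA,\CA)$, and the pseudoalgebra associativity and unit constraints translate precisely into the statement that $\hat a$ is a morphism of pseudomonoids from $(\mathrm{RSet}_{\mathrm{g}},\bullet,\mathbf{1})$ of \eqref{multRSet} to the endomorphism pseudomonoid $(\mathrm{Ser}(\CA,\CA),\circ,1_{\CA})$ under composition. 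Thus a pseudoalgebra is the same thing as such a structure-preserving $\hat a$, and the whole proposition reduces to matching these against Zeno functors on $\CA$.

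The key observation is that $\mathrm{Ser}(\CA,\CA)$ already carries a magnitude-category structure whenever $\CA$ does: by Example~\ref{functormagcats} its Zeno functor is the pointwise one, which is post-composition $H\circ(-)$, and one checks directly that $\widetilde{H\circ(-)}=\Sigma_n H^{\circ(n+1)}\circ(-)\cong \tilde H\circ(-)\cong 1_{\CA}\circ(-)$ via the constraint $\kappa$ and the distributivity isomorphisms \eqref{canonH}. For the direction ``magnitude category $\Rightarrow$ pseudoalgebra'', this is nothing but the construction already given before the statement: $\mathrm{RSet}_{\mathrm{g}}$ being the free magnitude category on $\mathbf{1}$, there is a strong magnitude functor $L\dd \mathrm{RSet}_{\mathrm{g}}\to \mathrm{Mgn}(\CA,\CA)$ with $L\mathbf{1}=1_{\CA}$ (see \eqref{smcinthom}), unique up to isomorphism, and freeness forces $L$, viewed into $\mathrm{Ser}(\CA,\CA)$, to respect the composition pseudomonoid, so that its transpose is the required action $\bullet$.

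For the converse, ``pseudoalgebra $\Rightarrow$ magnitude category'', I would start from a pseudomonoid morphism $\hat a$ as above and put $H\dd =\hat a(\mathbf{h})$, where $\mathbf{h}\dd =H_{\mathrm{RSet}_{\mathrm{g}}}(\mathbf{1})$ is the value at the generator of the Zeno functor of $\mathrm{RSet}_{\mathrm{g}}$ --- the object playing the role of ``half of $\mathbf{1}$''. Since $H=\hat a(\mathbf{h})$ is an object of $\mathrm{Ser}(\CA,\CA)$ it is automatically series strong monoidal. Because the functor $L$ defining \eqref{multRSet} is a magnitude functor with $L\mathbf{1}=1$ and the Zeno functor of the endomorphism object is post-composition, one gets $\mathbf{h}\bullet(-)\cong H_{\mathrm{RSet}_{\mathrm{g}}}$, whence $H_{\mathrm{RSet}_{\mathrm{g}}}^{\circ(n+1)}(\mathbf{1})\cong \mathbf{h}^{\bullet(n+1)}$, so that the Zeno constraint $\kappa$ of $\mathrm{RSet}_{\mathrm{g}}$, evaluated at $\mathbf{1}$, reads $\mathbf{1}\cong \Sigma_n \mathbf{h}^{\bullet(n+1)}$. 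Transporting this through $\hat a$, which preserves $\Sigma$ (series strong monoidal) and sends $\bullet$ to $\circ$ (pseudomonoid morphism), yields $1_{\CA}\cong \hat a(\mathbf{1})\cong \Sigma_n \hat a(\mathbf{h})^{\circ(n+1)}=\Sigma_n H^{\circ(n+1)}=\tilde H$, i.e. a series-monoidal isomorphism $\kappa_{\CA}\dd 1_{\CA}\cong \tilde H$ exhibiting $H$ as a Zeno functor and $\CA$ as a magnitude category.

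It remains to check that these two passages are mutually pseudo-inverse and extend to the $1$- and $2$-cells --- pseudoalgebra morphisms corresponding to magnitude functors and algebra $2$-cells to magnitude natural transformations --- yielding a biequivalence between the $2$-category of pseudoalgebras and $\mathrm{Mgn_sCat}$. I expect the main obstacle to be purely the coherence bookkeeping: one must verify that the isomorphism $\kappa_{\CA}$ produced above satisfies the well-pointedness axiom \eqref{wellpt}, and conversely that the pseudoalgebra coherence cells correspond under transposition to the constraint \eqref{magfuncond} for magnitude functors. All of these reduce, by the universal property of $\mathrm{RSet}_{\mathrm{g}}$, to identities already known on the single generator $\mathbf{1}$, so the real content lies in organizing the coherence diagrams rather than in any new computation; equivalently, the whole proposition can be repackaged as the assertion that the forgetful $2$-functor $\mathrm{Mgn_sCat}\to \mathrm{SerMn_sCat}$ is pseudomonadic with induced pseudomonad $\mathrm{RSet}_{\mathrm{g}}\otimes(-)$.
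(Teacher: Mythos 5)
The paper states this proposition without proof --- it is offered as the direct categorification of Proposition~\ref{EMmagmod}, which is itself quoted from Higgs without proof --- so there is no argument of the authors' to compare yours against line by line. Your sketch is nevertheless the natural way to supply one, and it is consistent with the machinery the paper sets up immediately before the statement: the transposition of an action $\mathrm{RSet}_{\mathrm{g}}\otimes \CA\to\CA$ through the closed structure \eqref{Ser symmon} to a pseudomonoid morphism into $\mathrm{Ser}(\CA,\CA)$, the use of the freely-generated strong magnitude functor $L$ to produce the action from a Zeno functor, and the extraction of $H=\hat a(\mathbf{h})$ with $\mathbf{h}=H_{\mathrm{RSet}_{\mathrm{g}}}(\mathbf{1})$ in the converse direction are all exactly the bicategorical analogues of the classical ``modules over a monoid $R$ are algebras for $R\otimes-$'' argument that underlies Higgs' result. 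Two points deserve more than the wave you give them. First, the claim that freeness forces $\hat a$ (respectively $L$) to be a pseudomonoid morphism presupposes that $\bullet$ of \eqref{multRSet} is itself coherently associative and unital; that is established by the same universal-property argument (two strong magnitude functors out of $\mathrm{RSet}_{\mathrm{g}}$ agreeing on $\mathbf{1}$ are isomorphic), but it should be stated as a preliminary step rather than absorbed silently. Second, the verification of the well-pointedness axiom \eqref{wellpt} for the induced $\kappa_{\CA}$, and of \eqref{magfuncond} for the induced morphisms, genuinely requires the pseudoalgebra coherence $2$-cells and is not literally an ``identity on the generator''; it reduces to the corresponding axiom holding in $\mathrm{RSet}_{\mathrm{g}}$ transported along the compatibility $\hat a\circ H_{\mathrm{RSet}_{\mathrm{g}}}\cong H\circ\hat a$. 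Your closing reformulation --- that the forgetful $2$-functor $\mathrm{Mgn_sCat}\to\mathrm{SerMn_sCat}$ is pseudomonadic with induced pseudomonad $\mathrm{RSet}_{\mathrm{g}}\otimes(-)$ --- is the cleanest packaging of what actually has to be proved, and citing the two-dimensional monad theory of \cite{BKP} there would let you outsource most of the remaining coherence bookkeeping.
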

 
 We define some objects of $\mathrm{RSet}_{\mathrm{g}}$ by
 $$\mathbf{\frac{1}{2}} =H(\mathbf{1}) \ \text{,   } \frac{\mathbf{1}}{\mathbf{2}^n} =H^{\circ n}(\mathbf{1})  \ \text{,   } \mathbf{\frac{1}{3}} =\Sigma_{n>0}\frac{\mathbf{1}}{\mathbf{2}^{2n}} \ \text{,   }  \mathbf{\frac{1}{4}} =\frac{\mathbf{1}}{\mathbf{2}^{2}} \ \text{,   }  \frac{\mathbf{m}}{\mathbf{2}^{2n}} = \mathbf{m}\bullet  \frac{\mathbf{1}}{\mathbf{2}^{2n}} \ , $$ 
and so on.
For any natural number $k$, let $t$ be the first natural number with $k\le 2^t$ and can define
$$\frac{\mathbf{1}}{\mathbf{k}} = \Sigma_{n\in \mathbb{N}}\left(\frac{\mathbf{(2^t-k)}^n}{\mathbf{2}^{t(n+1)}}\right) \ . $$ 

More typically, to obtain an object of cardinality $\pi$,
express $\pi - 3 = 0.a_1a_2\dots$ in binary form, 
 let $a_{m_n}=1$ be the $n$th non-zero term in that expansion. Then 
 $$\Pi= \mathbf{3} +  \frac{\mathbf{1}}{\mathbf{2}^{m_1}} + \frac{\mathbf{1}}{\mathbf{2}^{m_2}} + \dots$$
 is an object of $\mathrm{RSet}_{\mathrm{g}}$ with $\#\Pi = \pi$.   
 A more difficult question is whether $\Pi$ has interesting automorphisms.   

For any magnitude category $\CA$, we can define an {\em exponential functor}
 \begin{eqnarray}
E\dd \CA \lra \CA
\end{eqnarray}
by 
\begin{eqnarray}
E(X)= \Sigma_{n\in \mathbb{N}}\frac{\mathbf{1}}{\mathbf{n!}}\bullet \ X^{\bullet n} \ .
\end{eqnarray}

We also have the 2-functor
  \begin{eqnarray}\label{sermnmgn}
\mathrm{sermn}\dd \mathrm{Mgn_sCat} \lra \mathrm{Cat}
\end{eqnarray}
which takes each magnitude category $\CA$ to the category 
$\mathrm{sermn}\CA$ of series monoids in $\CA$.

  \begin{definition}\label{RSet}
   {\em The pseudo-representing object for the 2-functor \eqref{sermnmgn} is called 
   {\em the magnitude category of positive real sets} and denoted by $\mathrm{RSet}$.
 That is, there is an equivalence of categories 
 $$\mathrm{Mgn_sCat}(\mathrm{RSet},\CA) \cong \mathrm{sermn}\CA \ ,$$
 pseudonatural in magnitude categories $\CA$.}
  \end{definition}
  
  There is a magnitude functor $\mathrm{RSet}_g \lra \mathrm{RSet}$ taking $\mathbf{1}$
  to the generator $\mathbf{1}$ of $\mathrm{RSet}$.

\begin{remark}\label{catexplicitconstruct}  
{\em Here is a construction of $\mathrm{RSet}$ in the spirit of Remark~\ref{explicitconstruct}.
Begin with the pointwise series monoidal category $\CR_1=\mathrm{CF}^{\mathbb{N}}$
of sequences of countable sets. We have a series strict monoidal functor $H_1\dd \CR_1\to \CR_1$
defined as the suspension $H_1(X_0,X_1,X_2,\dots)=(0, X_0,X_1,\dots)$. 
We then form the series strict monoidal functor $\tilde{H_1}$; the formula is 
$$\tilde{H_1}(X_0,X_1,\dots)=(0,X_0,X_0+X_1,\dots)=(\Sigma_{n<0}X_n,\Sigma_{n<1}X_n,\Sigma_{n<2}X_n,\dots) \ .$$
Note that $\tilde{H_1}\circ H_1=H_1\circ \tilde{H_1}$.
Now form the isocoinserter \eqref{isocoins} of the identity functor of $\CR_1$ and $\tilde{H_1}$ in the 2-category $\mathrm{SerMn_sCat}$.
\begin{equation}\label{isocoins}
\begin{aligned}
\xymatrix{
\CR_1 \ar[d]_{1_{\CR_1}}^(0.5){\phantom{aaaaa}}="1" \ar[rr]^{\tilde{H_1}}  && \CR_1 \ar[d]^{P_1}_(0.5){\phantom{aaaaa}}="2" \ar@{=>}"1";"2"^-{\kappa_1}
\\
\CR_1 \ar[rr]_-{P_1} && \CR_2 
}
\end{aligned}
\end{equation}
Since we have the isomorphism $P_1\circ H_1\xra{\kappa_1H_1}P_1\circ \tilde{H_1}\circ H_1=P_1\circ H_1\circ \tilde{H_1}$,
the universal property of \eqref{isocoins} yields a unique series strong monoidal functor $H_2\dd \CR_2\to\CR_2$
such that $H_2\circ P_1=P_1\circ H_1$ and $H_2\circ \kappa_1 = \kappa_1\circ H_1$. 
It follows that $\tilde{H_2}\circ P_1=P_1\circ \tilde{H_1}$ and $\tilde{H_2}\circ \kappa_1 = \kappa_1\circ \tilde{H_1}$.  
So the universal property of \eqref{isocoins} yields a unique series strong monoidal natural isomorphism
$\kappa_2\dd 1_{\CR_2}\Rightarrow \tilde{H_2}$ such that $\kappa_2\circ P_1= \kappa_1$.
Note also that $\tilde{H_2}\circ H_2=H_2\circ \tilde{H_2}$.
So we have two 2-cells $H_2\circ \kappa_2$ and $\kappa_2\circ H_2$ from $H_2$ to $\tilde{H_2}\circ H_2$
and we can take their coequifier $P_2\dd \CR_2\to \CR$ in the 2-category $\mathrm{SerMn_sCat}$.
From the universal property of the coequifier, we obtain a unique series strong monoidal functor $H\dd \CR \to \CR$
with $P_2\circ H_2=H\circ P_2$, and then obtain a unique series monoidal natural isomorphism 
$\kappa \dd 1_{\CR}\Rightarrow \tilde{H}$ with $\kappa \circ P_2=P_2\circ\kappa_2$.
Now \eqref{wellpt} is satisfied and we have a Zeno functor $(H,\kappa)$ making $\CR$ a magnitude category.}
\end{remark}

\begin{proposition}\label{propositiononexplconst}
The magnitude category $\CR$ constructed in Remark~\ref{catexplicitconstruct} is equivalent to $\mathrm{RSet}$. 
\end{proposition}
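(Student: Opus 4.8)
The plan is to identify $\CR$ by its universal property rather than compare it to $\mathrm{RSet}$ object by object. By the bicategorical Yoneda lemma representing objects are unique up to equivalence, so it suffices to produce a pseudonatural equivalence $\mathrm{Mgn_sCat}(\CR,\CA)\simeq \mathrm{sermn}\CA$ for magnitude categories $\CA$; comparing with Definition~\ref{RSet} then yields $\CR\simeq \mathrm{RSet}$. Conceptually, the construction in Remark~\ref{catexplicitconstruct} is a generators-and-relations presentation of the free magnitude category on the walking series monoid $\mathrm{CF}$: one first freely adjoins a series strong monoidal endofunctor, then coinverts the comparison $1\cong\tilde H$, then imposes the single equation \eqref{wellpt}. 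Since $\mathrm{CF}$ pseudo-represents $\mathrm{sermn}$ on $\mathrm{SerMn_sCat}$ (Proposition~\ref{freesmoncat2}), this free magnitude category must pseudo-represent $\mathrm{sermn}$ on $\mathrm{Mgn_sCat}$, which is the defining property of $\mathrm{RSet}$. The categorified relation $\mathbf 1\cong\tilde H\mathbf 1$ is precisely the categorical analogue of the congruence $\chi_n\sim\chi_{n+1}+\chi_{n+2}+\cdots$ of Remark~\ref{explicitconstruct}.

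Concretely, I would compute $\mathrm{Mgn_sCat}(\CR,\CA)$ by peeling off the three layers of the construction. First, because a countable product of series monoidal categories is the bicategorical coproduct in $\mathrm{SerMn_sCat}$, the category $\CR_1=\mathrm{CF}^{\mathbb N}=\coprod_{\mathbb N}\mathrm{CF}$ is the result of freely adjoining a series strong monoidal endofunctor (the shift $H_1$) to $\mathrm{CF}$; thus $\mathrm{SerMn_sCat}(\CR_1,\CA)$ is equivalent, via Proposition~\ref{freesmoncat2}, to the category of sequences $(M_n)_{n\in\mathbb N}$ of series monoids in $\CA$, where $M_n$ is the value at the generator of the $n$-th copy. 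Second, the universal property of the isocoinserter \eqref{isocoins} adjoins a single invertible comparison $\kappa_1$, so $\mathrm{SerMn_sCat}(\CR_2,\CA)$ is equivalent to the category of such sequences equipped with coherent isomorphisms $M_n\cong\Sigma_{k>n}M_k$ (this is where $\tilde H_1$ enters, since $\tilde H_1(e_n)=\Sigma_{k>n}e_k$). Third, the coequifier $P_2$ forces the two $2$-cells $H_2\kappa_2$ and $\kappa_2 H_2$ to agree, which is exactly condition \eqref{wellpt}; so a series strong monoidal functor out of $\CR$ is such a sequence-with-comparisons satisfying the well-pointedness equation.

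To finish I would pass from $\mathrm{SerMn_sCat}$ to $\mathrm{Mgn_sCat}$ and collapse the sequence. A strong magnitude functor $F\dd\CR\to\CA$ carries, in addition, the invertible structure $\nu$ of \eqref{magfuncond}, giving natural isomorphisms $F\circ H^{\circ n}\cong H_\CA^{\circ n}\circ F$; hence the sequence $M_n=F(H^{\circ n}\mathbf 1)$ is forced to be $M_n\cong H_\CA^{\circ n}M$ with $M\dd =F(\mathbf 1)$, and the coinserter relation $\mathbf 1\cong\tilde H\mathbf 1$ transports to $M\cong\tilde H_\CA M$, reconciled with $\kappa_\CA$. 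Thus $F$ is determined up to coherent isomorphism by the single object $M$; and since the generator $\mathbf 1\in\CR$ is the image of the singleton of $\mathrm{CF}$ under the series strong monoidal functors $P_2P_1$, it is a series monoid in $\CR$, so $M=F(\mathbf 1)$ is a series monoid in $\CA$. Conversely, given a series monoid $M$ in $\CA$, the sequence $(H_\CA^{\circ n}M)_n$ (each a series monoid since the Zeno functor is series strong monoidal) defines a functor on $\CR_1$; the isomorphisms $H_\CA^{\circ n}M\cong\tilde H_\CA(H_\CA^{\circ n}M)=\Sigma_{k>n}H_\CA^{\circ k}M$ coming from $\kappa_\CA$ supply the coinserter data, the magnitude axiom \eqref{wellpt} holding in $\CA$ supplies the coequifier data, and the shift compatibility supplies $\nu$. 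These assignments are mutually quasi-inverse and pseudonatural in $\CA$.

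The routine algebra (recognizing $M_n\cong H_\CA^{\circ n}M$ and building the two functors) is straightforward; the main obstacle is the $2$-categorical coherence bookkeeping. Specifically, one must check that the isocoinserter and coequifier universal properties translate into exactly the stated data and nothing more, that the magnitude-functor structure $\nu$ is uniquely determined by the value on the generator and that its compatibility \eqref{magfuncond} is precisely matched by the well-pointedness equation imposed by the coequifier, and that all comparison cells assemble into a genuine pseudonatural equivalence rather than a mere objectwise equivalence. Verifying these coherences — in effect, that the presentation of Remark~\ref{catexplicitconstruct} is a correct presentation of the free magnitude-category $2$-monad relative to $\mathrm{SerMn_sCat}$ — is the technical heart of the argument.
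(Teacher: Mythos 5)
Your proposal is correct and follows essentially the same route as the paper: compute $\mathrm{Mgn_sCat}(\CR,\CA)$ by peeling off the universal properties of the coproduct $\CR_1=\mathrm{CF}^{\mathbb N}$, the isocoinserter and the coequifier, reduce via Proposition~\ref{freesmoncat2} to a sequence of series monoids $A_n$ in $\CA$, and use $\nu_1$ to see that the sequence is determined up to isomorphism by $A_0$, so that $\CR$ pseudo-represents $\mathrm{sermn}$ as in Definition~\ref{RSet}. You spell out the quasi-inverse and the coherence obligations more explicitly than the paper's sketch, but the underlying argument is the same.
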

\begin{proof}
The universal properties of $\CR_2$ and $\CR$ combine to show that strong magnitude functors
$F\dd \CR\to \CA$ are in bijection with series strong monoidal functors $G\dd \CR_1\to \CA$ equipped with
a series monoidal isomorphism $$\nu_1\dd H\circ G\Ra G\circ H_1 \ .$$
However, $\CR_1=\mathrm{CF}^{\mathrm{N}}$ is the coproduct of countably many copies of $\mathrm{CF}$.
So, to give $G$ is equivalently to give a sequence of series strong monoidal functors $G_n\dd \mathrm{CF}\to \CA$.
By Proposition~\ref{freesmoncat2}, to give such a sequence is equivalent to giving a sequence of series monoids $A_n$ in $\CA$. 
However, $\nu_1$ induces a series monoid isomorphism $\nu_n\dd HA_n\cong A_{n+1}$.
So the sequence of series monoids is, up to canonical isomorphism, determined by the single series monoid $A=A_0$. 
\end{proof}

\section{Remarks on integer sets}\label{rois}

To obtain reals from integers, Higgs taught us to introduce a halving operation.
It is obvious to all that to obtain integers from natural numbers, 
we need to introduce a minus operation.
A categorical version of minus might be dual. If we think of the categorical integers
as forming the free symmetric monoidal category on a single generating object,
we might think of the categorical integers as forming a compact closed
category in the sense of Kelly \cite{KellyMvfcI}; this includes symmetry.

Let $\mathrm{symMon}$ denote the groupoid-enriched category of symmetric
monoidal categories, symmetric strong monoidal functors, and monoidal
natural isomorphisms. Let $\mathrm{CmpClsd}$ denote the full sub-groupoid-enriched 
category of $\mathrm{symMon}$ consisting of the compact closed categories.
The inclusion $$\mathrm{CmpClsd}\lra \mathrm{symMon}$$ has a left biadjoint.
The value of this biadjoint at the category $\mathrm{FB}$ of finite sets and bijections
might be a candidate for a category $\mathrm{ZSet}$ of integer sets.

A category $\mathrm{IntRel}$ of integer sets and relations was introduced in \cite{51}.
It is the free tortile monoidal category on the symmetric traced monoidal category
$\mathrm{Rel}$ of sets and relations. The explicit description can be found in Section 6
of \cite{51}: the objects are pairs $(X,U)$ of sets and the morphisms $R\dd (X,U)\to (Y,V)$
are relations from $X+V$ to $Y+U$, while the composition uses the trace.
Trace categorizes the cancellation property of addition of natural numbers.  

\begin{lemma}\label{injectivetrace}
If a morphism $R\dd X+U \to Y+U$ in $\mathrm{Rel}$ is the graph of an injective
function $f\dd X+U \to Y+U$ and $U$ is a finite set then the trace 
$\mathrm{Tr}^U_{X,Y}(R)\dd X\to Y$ is the graph of an injective function
$\mathrm{Tr}^U(f)\dd X\to Y$. Indeed, $\mathrm{Tr}^U(f)(x) = f^{\circ n}(x)\in Y$
where $n>0$ is such that $f^{\circ m}(x)\in U$ for all $m<n$. 
If $f\dd X+U \to Y+U$ is bijective then so is $\mathrm{Tr}^U(f)\dd X\to Y$.      
\end{lemma}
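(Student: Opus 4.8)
The plan is to unwind the relational trace on $\mathrm{Rel}$ explicitly and then read off both assertions from the deterministic, finite dynamics of iterating $f$. Recall that for $R\dd X+U\to Y+U$ the canonical trace in $\mathrm{Rel}$ is $\mathrm{Tr}^U(R)=R_{XY}\vee R_{UY}\circ R_{UU}^{*}\circ R_{XU}$, where $R_{XY},R_{XU},R_{UU},R_{UY}$ are the four restrictions of $R$ and $R_{UU}^{*}=\bigvee_{k\geq 0}R_{UU}^{k}$ is the reflexive–transitive closure. Spelling this out, $(x,y)\in \mathrm{Tr}^U(R)$ holds exactly when there is a finite $R$-path $x\to u_1\to\cdots\to u_{n-1}\to y$ with all intermediate $u_i\in U$ and $y\in Y$. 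When $R$ is the graph of a function $f$, every step of such a path is forced, so the only candidate path out of $x\in X$ is the forward orbit $x,f(x),f^{\circ 2}(x),\dots$ traced until it first lands in $Y$. I would therefore reduce the whole statement to understanding this orbit; single-valuedness of the trace is then immediate, since the path, if it exists, is unique and exits at the least $n>0$ with $f^{\circ n}(x)\in Y$.

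The main point, and the place where both hypotheses are genuinely needed, is termination: for every $x\in X$ the orbit must eventually meet $Y$. I would argue by contradiction. If $f^{\circ m}(x)\in U$ for all $m\geq 1$, then since $U$ is finite the pigeonhole principle yields $1\leq i<j$ with $f^{\circ i}(x)=f^{\circ j}(x)$. Peeling off one $f$ at a time using injectivity gives $f^{\circ(i-1)}(x)=f^{\circ(j-1)}(x)$, and iterating down to index $0$ produces $x=f^{\circ(j-i)}(x)$; but $j-i\geq 1$ forces $f^{\circ(j-i)}(x)\in U$ while $x\in X$, contradicting the disjointness of $X$ and $U$. Hence a least $n>0$ with $f^{\circ n}(x)\in Y$ exists, and combining this with single-valuedness shows that $\mathrm{Tr}^U(R)$ is precisely the graph of the function $\mathrm{Tr}^U(f)\dd x\mapsto f^{\circ n}(x)$, as claimed.

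Injectivity then follows from the same backward-peeling idea. Suppose $\mathrm{Tr}^U(f)(x)=\mathrm{Tr}^U(f)(x')=y$, say $y=f^{\circ n}(x)=f^{\circ n'}(x')$ with $n\leq n'$. Injectivity of $f^{\circ n}$ gives $x=f^{\circ(n'-n)}(x')$; if $n'-n\geq 1$ the right-hand side is an interior orbit point of $x'$ (its index lies strictly between $0$ and $n'$), hence lies in $U$, contradicting $x\in X$, so $n=n'$ and $x=x'$. For the bijective addendum I would exploit the symmetry of the construction under passing to $f^{-1}\dd Y+U\to X+U$, which is again a bijection, hence injective, with $U$ still finite: running the termination argument for $f^{-1}$ shows every $y\in Y$ has a least $m>0$ with $(f^{-1})^{\circ m}(y)\in X$, and setting $x=(f^{-1})^{\circ m}(y)$ gives $\mathrm{Tr}^U(f)(x)=f^{\circ m}(x)=y$, so $\mathrm{Tr}^U(f)$ is surjective as well. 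The only delicate step is termination; everything else is bookkeeping with the deterministic orbit, and both the finiteness of $U$ and the injectivity of $f$ are indispensable there, since dropping either hypothesis allows an orbit to loop forever inside $U$ and never reach $Y$.
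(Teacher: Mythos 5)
Your proof is correct and follows exactly the route the paper intends: the published proof is only a two-sentence sketch referring the reader to the matrix formula for the trace in $\mathrm{Rel}$ and noting that finiteness of $U$ guarantees the existence of $n$, and your argument is precisely the ``easy exercise'' it leaves out, worked in full. One small point in your favour: you correctly observe that injectivity of $f$ (via the backward-peeling argument showing the orbit cannot cycle inside $U$) is needed for termination alongside the finiteness of $U$, a dependency the paper's remark does not make explicit.
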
 
\begin{proof}
This is an easy exercise for a reader who recalls the matrix formula for $\mathrm{Tr}^U_{X,Y}(R)$
in \cite{51}. The reason there is such an $n$ is that $U$ is finite. 
\end{proof}

Therefore, we may wish to replace $\mathrm{symMon}$ by the groupoid-enriched category 
$\mathrm{symMon}_*$ of symmetric monoidal categories for which the tensor unit is initial.
Then the appropriate replacement for $\mathrm{FB}$ is the category $\mathrm{FI}$ of finite
sets and injective functions; by Lemma~\ref{injectivetrace}, both of these are traced
monoidal subcategories of $\mathrm{Rel}$. 

This produces two candidates $\mathrm{IntFI}$
and $\mathrm{IntFB}$ for categories of integer sets. The objects are pairs $(X,U)$ of finite 
sets and the morphisms $f\dd (X,U)\to (Y,V)$ are injective or bijective functions
$f\dd X+V\to Y+U$, respectively. The composite $g\circ f$ of $f\dd (X,U)\to (Y,V)$ and $g\dd (Y,V)\to (Z,W)$
is the trace $\mathrm{Tr}^V(g\# f)$ of the function $g\# f \dd X+W+V\to Z+U+V$ defined as 
follows. For $p\in X+V$,
 \begin{equation*}
g\# f(p) =
\begin{cases}
g(f(p))\in Z+V & \text{if } f(p) \in Y, \\
f(p)\in U & \text{otherwise } ;
\end{cases}
\end{equation*}  
while $g\# f(p) =g(p)\in Z+V$ for $p\in W$. 

Of course, $\mathbb{N}$ is traced monoidal as an ordered set under addition.
We have the inclusion functor $\mathrm{FB} \to \mathrm{FI}$ and the cardinality functor 
$\mathrm{FI}\to \mathbb{N}$ which are both traced symmetric strong monoidal. 
Since $\mathrm{Int}$ is a groupoid-enriched functor, we obtain symmetric strong monoidal functors
$$\mathrm{IntFB} \lra \mathrm{IntFI} \lra \mathbb{Z}$$
providing cardinalities for ``integer sets''.

A geometric approach, based on the idea that Euler characteristic extends cardinality, is that of Schanuel \cite{Schanuel1991}.  

\begin{remark}\label{euler} {\em There is a classical obstruction to having both associative infinite sums and negatives.
The only element $c$ with an inverse $-c$ for the binary addition
in a series monoid is $0$. The proof goes back to Euler:
\begin{eqnarray*}
0 = 0 + 0 + \dots = (c-c)+(c-c)+\dots = c+(-c+c)+(-c+c)+\dots = c .
\end{eqnarray*}}
\end{remark}

A different tack, suggested by Remark~\ref{DayRemark} following Day, is to note that $\mathrm{RSet}_{\mathrm{g}}$
itself with the multiplication monoidal structure \eqref{multRSet}, might be considered
up to equivalence to be not just positive but all extended real sets, with addition as the monoidal structure.
We do not know whether this structure is $*$-autonomous.

 \section{Categories enriched in a series \\ monoidal category}\label{ceismc}

Let $\CV$ denote a series monoidal category.
It becomes symmetric monoidal under binary summation according to Proposition~\ref{symmonadd}. 
The usual notion of $\CV$-category $\CA$ makes sense as per \cite{KellyBook}. 
Because of the symmetry, the opposite $\CA^{\mathrm{op}}$ and tensor product, here
written as a sum $\CA +\CB$, of enriched categories are already defined. 

What we wish to point out now is the possibility to sum series of $\CV$-categories.
By Proposition~\ref{symmonadd}, we have the symmetric strong monoidal functor 
$\Sigma \dd \CV^{\mathbb{N}}\to \CV$
and so, the symmetric strong monoidal 2-functor
 $$\Sigma_* \dd \CV^{\mathbb{N}}\text{-}\mathrm{Cat}\lra \CV\text{-}\mathrm{Cat}$$
in the notation of Eilenberg-Kelly \cite{EilKel1966}.
There is also the symmetric strong monoidal 2-functor
$$\mathrm{Q}\dd (\CV\text{-}\mathrm{Cat})^{\mathbb{N}}\lra \CV^{\mathbb{N}}\text{-}\mathrm{Cat}$$
taking a sequence $\CA=(\CA_n)_{n\in \mathbb{N}}$ of $\CV$-categories to the $\CV^{\mathbb{N}}$-category
$\mathrm{Q}\CA$ whose objects are objects of the cartesian product $\prod_{n\in \mathbb{N}}\CA_n$
and whose homs are defined by $\mathrm{Q}\CA(A,B)_n= \CA_n(A_n,B_n)$. 
Now define $\Sigma$ by composing thus:
\begin{eqnarray}\label{enrichedSigma}
\Sigma\dd (\CV\text{-}\mathrm{Cat})^{\mathbb{N}}\xra{\mathrm{Q}} \CV^{\mathbb{N}}\text{-}\mathrm{Cat}\xra{\Sigma_*} \CV\text{-}\mathrm{Cat} \ .
\end{eqnarray}
 
 Explicitly, for a sequence $\CA=(\CA_n)_{n\in \mathbb{N}}$ of $\CV$-categories, the objects of 
 $\Sigma \CA$ are families $A=(A_n)_{n\in \mathbb{N}}$ of objects $A_n\in \CA_n$,
 whereas the homs are defined by $\Sigma \CA(A,B)= \Sigma_n\CA_n(A_n,B_n)$.
 In the obvious sense:
 
 \begin{proposition}
 The 2-category $\CV\text{-}\mathrm{Cat}$ is series monoidal with this choice of $\Sigma$. 
 \end{proposition}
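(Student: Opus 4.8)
The plan is to transport the entire series monoidal structure of $\CV$ across the two symmetric strong monoidal $2$-functors $\mathrm{Q}$ and $\Sigma_*$ whose composite defines $\Sigma$ in \eqref{enrichedSigma}. Since $\Sigma=\Sigma_*\circ\mathrm{Q}$ is a composite of $2$-functors it is automatically functorial, so the only real work is to produce the coherence isomorphisms $\gamma$ and $\lambda$ of \eqref{smoncatdata} and to verify the axioms \eqref{gammacond}, \eqref{lambdacond} together with the condition that the $\lambda_n$ agree at $0$. First I would fix the zero object to be the unit $\CV$-category $\mathbf{0}$ with a single object $*$ and $\mathbf{0}(*,*)=0_{\CV}$. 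A direct inspection shows that $\Sigma$ applied to a sequence that is $\CA$ in one place and $\mathbf{0}$ elsewhere has, on objects, exactly the objects of $\CA$ (the forced choices at the other coordinates contribute nothing) and, on homs, the $\CV$-object $\Sigma^{\CV}$ of the sequence $\CA(A,B),0_{\CV},0_{\CV},\dots$, which is $\CA(A,B)$ by the $\lambda$-isomorphism of $\CV$. Thus $\mathbf{0}$ is also the tensor unit for the binary sum of Proposition~\ref{symmonadd}, i.e.\ the usual unit $\CV$-category.

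Next I would record the explicit description of $\Sigma\CA$ for a sequence $\CA=(\CA_n)$: its objects are the families $(A_n)\in\prod_n\mathrm{ob}\,\CA_n$, its hom objects are $\Sigma\CA(A,B)=\Sigma^{\CV}_n\CA_n(A_n,B_n)$, and composition and identities are supplied by the strong monoidal constraints of $\Sigma^{\CV}$ (this is precisely the content of $\Sigma_*\circ\mathrm{Q}$). With this description the candidate coherences write themselves. For a double sequence $(\CA_{m,n})$ both $\Sigma_m\Sigma_n\CA_{m,n}$ and $\Sigma_n\Sigma_m\CA_{m,n}$ have object set $\prod_{m,n}\mathrm{ob}\,\CA_{m,n}$, and I would take $\gamma$ to be the identity-on-objects assignment whose hom-component is the $\CV$-isomorphism $\gamma^{\CV}\dd\Sigma^{\CV}_m\Sigma^{\CV}_n\to\Sigma^{\CV}_n\Sigma^{\CV}_m$ applied to the double sequence of hom objects; similarly $\lambda$ is built from $\lambda^{\CV}$ on homs together with the evident reindexing bijection on objects.

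The crux, and the step I expect to be the main obstacle, is verifying that these identity-on-objects, hom-wise-$\gamma^{\CV}$ (respectively hom-wise-$\lambda^{\CV}$) assignments really are $\CV$-functors, i.e.\ that they respect the enriched composition and identities of $\Sigma\CA$. This is exactly the statement that $\gamma^{\CV}$ and $\lambda^{\CV}$ are \emph{monoidal} natural isomorphisms between the relevant composite symmetric strong monoidal functors $\CV^{\mathbb{N}\times\mathbb{N}}\to\CV$ built from $\Sigma^{\CV}$, where the monoidal structure is the binary sum of Proposition~\ref{symmonadd}. The clean way to see this is through functoriality of change of base: a monoidal natural transformation between lax monoidal functors $\CW\to\CV$ induces a $2$-natural transformation between the associated base-change $2$-functors $\CW\text{-}\mathrm{Cat}\to\CV\text{-}\mathrm{Cat}$, acting as the identity on objects and by that transformation on homs. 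So I would first establish that $\gamma^{\CV}$ and $\lambda^{\CV}$ are monoidal for the binary sum — this follows from the series monoidal axioms of $\CV$, concretely from the coherences \eqref{gammacond} and \eqref{lambdacond} expressing that re-bracketing and re-indexing of sums are compatible, together with the fact recorded in Proposition~\ref{symmonadd} and Proposition~\ref{2EckHilt} that $\Sigma^{\CV}$ and its pointwise powers are symmetric strong monoidal — and then obtain $\gamma$ and $\lambda$ for $\CV\text{-}\mathrm{Cat}$ by transporting along $\Sigma_*$ and composing with the strong monoidal structure of $\mathrm{Q}$.

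Finally, the axioms fall out levelwise. On objects, \eqref{gammacond}, \eqref{lambdacond} and the equality of the $\lambda_n$ at $0$ are all statements about the canonical reindexing bijections of products of object-sets, which commute trivially. On homs, each of these diagrams is the corresponding diagram of \eqref{gammacond} or \eqref{lambdacond} for $\CV$ evaluated at the double (respectively single) sequence of hom objects, and so commutes because $\CV$ is series monoidal. Since a $\CV$-functor is determined by its object assignment together with its hom morphisms, these levelwise commutations assemble into the required equalities of $\CV$-functors, completing the verification.
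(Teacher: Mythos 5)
Your proposal is correct and follows exactly the route the paper intends: the paper states this proposition without proof, regarding it as immediate from the construction of $\Sigma$ as the composite $\Sigma_*\circ\mathrm{Q}$ of symmetric strong monoidal $2$-functors and the explicit description of $\Sigma\CA$ on objects and homs. Your filling-in of the details — taking the unit $\CV$-category as the zero object, building $\gamma$ and $\lambda$ identity-on-objects with hom-components $\gamma^{\CV}$ and $\lambda^{\CV}$, and justifying $\CV$-functoriality via monoidality of these transformations and Eilenberg--Kelly change of base — is precisely the verification the paper leaves to the reader.
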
 
 
  \begin{proposition}
 There is a series monoidal 2-functor $$\mathrm{SerMnCat}\to \mathrm{sMon2\text{-}Cat}$$ 
 taking $\CV$ to $\CV\text{-}\mathrm{Cat}$ and $F\dd \CV\to \CW$ to $F_*\dd \CV\text{-}\mathrm{Cat}\to \CW\text{-}\mathrm{Cat}$. 
 \end{proposition}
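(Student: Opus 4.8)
The plan is to build $\Phi$ from the classical change-of-base construction and then equip it with the series monoidal comparison supplied by the functor $\mathrm{Q}$ of this section. First I would regard each series monoidal $\CV$ as symmetric monoidal via Proposition~\ref{symmonadd} (binary sum $+$, unit $0$), and note that a series monoidal functor $F\dd\CV\to\CW$ is in particular symmetric monoidal for these structures. The Eilenberg--Kelly change of base (\cite{EilKel1966}, \cite{KellyBook}) then produces the 2-functor $F_*\dd\CV\text{-}\mathrm{Cat}\to\CW\text{-}\mathrm{Cat}$ that is the identity on objects, applies $F$ to hom-objects, and builds composition and units from the coherence morphisms $\phi,\phi_0$ of $F$; it sends each monoidal $\sigma\dd F\Rightarrow G$ to a 2-natural $\sigma_*$, and is (pseudo)functorial, $(GF)_*\cong G_*F_*$ and $(1_\CV)_*=1$. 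This is the underlying assignment of $\Phi$.

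Next I would check that $F_*$ is not merely a 2-functor but a $1$-cell of $\mathrm{sMon2\text{-}Cat}$, i.e. a series monoidal 2-functor. Recall from \eqref{enrichedSigma} that the series monoidal structure on $\CV\text{-}\mathrm{Cat}$ sends $(\CA_n)$ to the $\CV$-category $\Sigma\CA$ with objects $\prod_n\mathrm{ob}\CA_n$ and homs $\Sigma_n\CA_n(a_n,a'_n)$. Since $F_*$ preserves objects and applies $F$ to homs, the comparison $\Sigma_nF(\CA_n(a_n,a'_n))\to F(\Sigma_n\CA_n(a_n,a'_n))$ needed to make $F_*$ series monoidal is exactly the component of the structure morphism $\phi$ of the series monoidal functor $F$, and the axioms \eqref{presgamma}, \eqref{preslambda} for $F$ give the corresponding axioms for $F_*$. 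Likewise $\sigma_*$ is series monoidal because $\sigma$ satisfies \eqref{smonnat}. Hence $\Phi$ really does land in $\mathrm{sMon2\text{-}Cat}$.

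To make $\Phi$ itself series monoidal I must supply $\phi_0$ and $\phi$ for the ambient structures, which on both $\mathrm{SerMnCat}$ and $\mathrm{sMon2\text{-}Cat}$ are the countable-product $\Sigma$ (Example~\ref{countcoprodex}), with $0$ the terminal (2-)category. I would take $\phi_0\dd\mathbf{1}\to\mathbf{1}\text{-}\mathrm{Cat}$ to pick out the one-object $\mathbf{1}$-category (so $\Phi$ is lax, not strong, since $\mathbf{1}\text{-}\mathrm{Cat}=\mathrm{Set}$), and $\phi\dd\prod_n(\CV_n\text{-}\mathrm{Cat})\to(\prod_n\CV_n)\text{-}\mathrm{Cat}$ to be the evident generalization of $\mathrm{Q}$ to varying bases: it sends a family $(\CA_n)$ to the $(\prod_n\CV_n)$-category with object set $\prod_n\mathrm{ob}\CA_n$ and hom $(\CA_n(a_n,a'_n))_n$ between $(a_n)$ and $(a'_n)$. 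This rests on the elementary identification of a $(\prod_n\CV_n)$-category with a family of $\CV_n$-categories on a common object set. Naturality of $\phi$ in the family, namely $(\prod_nF_n)_*\circ\mathrm{Q}\cong\mathrm{Q}\circ\prod_n(F_n)_*$, holds because both composites send $(\CA_n)$ to the category with objects $\prod_n\mathrm{ob}\CA_n$ and homs $(F_n\CA_n(a_n,a'_n))_n$.

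Finally I would verify the series-monoidal-functor axioms for $\Phi$: the analogue of \eqref{presgamma} for the reindexing isomorphism $\gamma$ (Fubini for countable products) and of \eqref{preslambda} for $\lambda$ (the diagonal-and-zero comparison $\delta$). Under $\phi=\mathrm{Q}$, a doubly-indexed family $(\CA_{mn})$ goes to the category with objects $\prod_{m,n}\mathrm{ob}\CA_{mn}$ and homs formed componentwise, so swapping the two indices is precisely $\gamma$, while a family concentrated in one slot (terminal elsewhere) returns the single category, which is precisely $\lambda$; both squares then commute because each is assembled componentwise from the series monoidal axioms \eqref{gammacond}, \eqref{lambdacond} already holding in each $\CV_n$. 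I expect the main obstacle to be exactly this 2-categorical bookkeeping: $\phi$ simultaneously performs a genuine product on object-sets and a componentwise operation on hom-objects, so the structural cells must be shown compatible with $\gamma$ and $\lambda$ at the object level and the hom level at once. Conceptually the verification is routine, since every coherence cell is induced componentwise, but organising it so that the two levels interlock correctly is where the real care lies.
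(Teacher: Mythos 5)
The paper states this proposition without any proof (as with the proposition immediately preceding it, which is prefaced only by ``In the obvious sense''), so there is no argument of the authors' to compare against; your proposal simply supplies the routine verification the paper omits, and it does so correctly using exactly the ingredients the paper has already set up: Proposition~\ref{symmonadd} and Eilenberg--Kelly change of base for the underlying assignment $F\mapsto F_*$, the hom-components of $\phi$ for the series monoidal structure of each $F_*$, and the varying-base generalization of $\mathrm{Q}$ as the structure cell $\prod_n(\CV_n\text{-}\mathrm{Cat})\to(\prod_n\CV_n)\text{-}\mathrm{Cat}$ making $\Phi$ itself series monoidal. Your observation that $\Phi$ is only lax (not strong), e.g.\ because $\mathbf{1}\text{-}\mathrm{Cat}\simeq\mathrm{Set}$ is not terminal, is correct and consistent with the statement, which asserts ``series monoidal'' rather than ``series strong monoidal''.
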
 
 
 When our base series monoidal category $\CV$ is cocomplete in a manner allowing discussion
 of the bicategory $\CV\text{-}\mathrm{Mod}$ of $\CV$-categories and $\CV$-modules,
 the operation \eqref{enrichedSigma} extends to  
 \begin{eqnarray}\label{enrichedSigmaMod}
\Sigma\dd (\CV\text{-}\mathrm{Mod})^{\mathbb{N}}\lra \CV\text{-}\mathrm{Mod} \ .
\end{eqnarray}
This provides an example $\CV\text{-}\mathrm{Mod}$ of a series monoidal bicategory,
leading on to series promonoidal categories and convolution.

\section{$\omega$-Magmas and $\omega$-monoids}\label{omegacase}

It is natural to define monoidal categories before symmetric monoidal categories,
yet here, with the countable version, we have presented the commutative case without
mentioning the non-commutative possibility. 
The next two sections correct that omission for posterity.
 
Now we wish to pass to multiplicative terminology rather than additive. 
We call a series magma $A$ an {\em $\omega$-magma} when the operation 
$\Sigma\dd A^{\mathbb{N}}\to A$ is denoted by $\otb \dd A^{\omega}\to A$, where $\omega = \mathbb{N}$
as a linearly ordered set, and $0\in A$ is denoted by $1\in A$. 
The informal notation $\otb_{n\in \omega}a_n = a_0\ot a_1\ot \dots$ is
also helpful. 

We recall \eqref{sum/S} in the new notation. 
For any $\omega$-magma $A$ and subset $S\subseteq \omega$, 
we can define an operation $\otb_S\dd A^S\to A$ whose value at $a\in A^S$ is 
\begin{eqnarray}\label{tensor/S}
\otb_{n\in S}a_n= \otb_{n\in \omega}c_n
\end{eqnarray}
where
 \begin{equation*}
c_n =
\begin{cases}
a_n & \text{if }  n\in S\\
1 & \text{otherwise} \ .
\end{cases}
\end{equation*}

\begin{definition} {\em An {\em $\omega$-monoid} $A$ is an $\omega$-magma
such that, for all order-preserving functions $\xi\dd \omega \to \omega$, 
\begin{eqnarray}\label{genassoc}
\otb_{n\in \omega}\otb_{m\in \xi^{-1}(n)} a_m = \otb_{n\in \omega} a_n \ .
\end{eqnarray}
 } 
\end{definition} 

\begin{remark} {\em Each fibre $\xi^{-1}(n)$ of an order-preserving function $\xi\dd \omega \to \omega$
is either finite or forms a final segment of $\omega$. The latter case occurs only if $\xi$ has finite image,
and then only for the last fibre. 
} 
\end{remark}

\begin{remark} {\em After submitting the present paper, it came to our notice that \eqref{genassoc} is also the ``general associativity postulate'' (II') of Tarski \cite{Tarski1956}. He claimed it too restrictive for his purposes. 
} 
\end{remark}

\begin{example} {\em Every series monoid is an $\omega$-monoid with $\otb=\Sigma$ and $1=0$.} 
\end{example} 
 
\begin{example}\label{multP} {\em If $A$ admits an associative binary multiplication in $\mathrm{SerMn}$ 
then $A$ becomes an $\omega$-monoid with $\otb=\mathrm{P}$ (see \eqref{fP}) and $1=0$.
This was foreshadowed in Remark~\ref{foreshadomega}.} 
\end{example}

\section{$\omega$-Monoidal categories}\label{omegamoncats}

For any category $\CA$, object $I\in \CA$, and functor
\begin{eqnarray*}
\otb : \CA^{\omega} \lra \CA \ , \ (A_i)_{i\in \omega} \longmapsto \otb_{i\in \omega} A_i \ ,
\end{eqnarray*}
each subset $S\subseteq \omega$, 
determines an operation $\otb_S\dd \CA^S\to \CA$ whose value at $A\in \CA^S$ is 
\begin{eqnarray}\label{tensor/S}
\otb_{n\in S}A_n= \otb_{n\in \omega}C_n
\end{eqnarray}
where
 \begin{equation*}
C_n =
\begin{cases}
A_n & \text{if }  n\in S\\
I & \text{otherwise} \ .
\end{cases}
\end{equation*}
 
For $S, T\subseteq \omega$ and any function $\xi\dd S\to T$, define $\xi_*\dd \CA^{S}\to \CA^{T}$ by
$$\xi_*(A)_n=\otb_{\xi(m) = n} A_m$$ 
for all $A\in \CA^S$ and $n\in T$. 

\begin{definition} {\em An {\em $\omega$-monoidal category} is a category 
$\CA$ equipped with an object $I\in \CA$, a functor 
\begin{eqnarray*}
\otb : \CA^{\omega} \lra \CA \ , \ (A_i)_{i\in \omega} \longmapsto \otb_{i\in \omega} A_i
\end{eqnarray*}
and, for all order-preserving functions $\xi\dd \omega \to \omega$, natural isomorphisms

\begin{equation}\label{omegamoncatdata}
\begin{aligned}
\xymatrix{
\CA^{\omega} \ar[rd]_{\otb}^(0.5){\phantom{a}}="1" \ar[rr]^{\xi_*}  && \CA^{\omega}  \ar[ld]^{\otb}_(0.5){\phantom{a}}="2" \ar@{=>}"1";"2"^-{\alpha_{\xi}}
\\
& \CA
}
\quad
\xymatrix{
\CA \ar[rd]_{1_{\CA}}^(0.5){\phantom{a}}="1" \ar[rr]^{\delta_n}  && \CA^{\omega} \ar[ld]^{\otb}_(0.5){\phantom{a}}="2" \ar@{=>}"1";"2"^-{\lambda_n}
\\
& \CA \ &  
}
\end{aligned}
\end{equation}
subject to the conditions that the components
of the $\lambda_n$ at $I$ are all equal and there are the 
equations \eqref{alphacond} and \eqref{omegalambdacond} of pasting diagrams.} 
\end{definition}

\begin{eqnarray}\label{alphacond}
\begin{aligned}
\xymatrix{
\CA^{\omega}\ar[r]^{\zeta_*}_{~}="1" \ar[ddr]_{\otb}^(0.35){~}="4" & \CA^{\omega}\ar[r]^{\xi_*}_{~}="2" \ar[dd]_{\otb}_(0.3){~}="6a"^(0.33){~}="6" & \CA^{\omega} \ar[ddl]^{\otb}_(0.35){~}="7"  &  & \CA^{\omega}\ar[rr]^{(\xi\zeta)_*}_{~}="3" \ar[ddr]_{\otb}^{~}="5" & & \CA^{\omega}\ar[ddl]^{\otb}_{~}="8" \\
& & & = \\
& \CA  & & & & \CA  &
\ar@{=>}"4";"6a"^{\alpha_{\zeta}}
\ar@{=>}"6";"7"^{\alpha_{\xi}}
\ar@{=>}"5";"8"^{\alpha_{\xi\zeta}} 
}
\end{aligned}
\end{eqnarray}

\begin{eqnarray}\label{omegalambdacond}
\begin{aligned}
\xymatrix{
\CA\ar[r]^{\delta_n}_{~}="1" \ar[ddr]_{1_{\CA}}^(0.35){~}="4" & \CA^{\omega}\ar[r]^{\xi_*}_{~}="2" \ar[dd]_{\otb}_(0.3){~}="6a"^(0.33){~}="6" & \CA^{\omega} \ar[ddl]^{\otb}_(0.35){~}="7"  &  & \CA\ar[rr]^{\delta_{\xi(n)}}_{~}="3" \ar[ddr]_{1_{\CA}}^{~}="5" & & \CA^{\omega}\ar[ddl]^{\otb}_{~}="8" \\
& & & = \\
& \CA  & & & & \CA  &
\ar@{=>}"4";"6a"^{\lambda_n}
\ar@{=>}"6";"7"^{\alpha_{\xi}}
\ar@{=>}"5";"8"^{\lambda_{\xi(n)}} 
}
\end{aligned}
\end{eqnarray}

\begin{example} {\em Every series monoidal category is $\omega$-monoidal with $\otb=\Sigma$ and $I=0$.} 
\end{example} 
 
\begin{example}\label{catmultP} {\em As a categorical version of Example~\ref{multP}, if $\CA$ is a
``series rig category'', possibly without unit, then $\CA$ becomes $\omega$-monoidal with 
$\otb$ taken to be a categorical version of the $\mathrm{P}$ of \eqref{fP} and $I=0$.} 
\end{example}

\begin{center}
--------------------------------------------------------
\end{center}

\appendix

\end{document}